\newtheorem{theorem}{Theorem}[section]
\newtheorem{corollary}[theorem]{Corollary}
\newtheorem{lemma}[theorem]{Lemma}
\newtheorem{proposition}[theorem]{Proposition}
\theoremstyle{definition}
\newtheorem{definition}[theorem]{Definition}
\newtheorem{remark}[theorem]{Remark}
\newtheorem{example}[theorem]{Example}
\numberwithin{equation}{section}
\title{Chord index for knots in thickened surfaces}
\author{Zhiyun Cheng}
\author{Hongzhu Gao}
\author{Mengjian Xu}
\address{Laboratory of Mathematics and Complex Systems (Ministry of Education), School of Mathematical Sciences, Beijing Normal University, Beijing 100875, China}
\address{Laboratory of Mathematics and Complex Systems (Ministry of Education), School of Mathematical Sciences, Beijing Normal University, Beijing 100875, China}
\address{College of Mathematics and Statistics, Guangxi Normal University, Guilin, 541004, P. R. China}
\email{czy@bnu.edu.cn}
\email{hzgao@bnu.edu.cn}
\email{xmj@gxnu.edu.cn}
\subjclass[2020]{57K12, 57K20}
\keywords{chord index; writhe polynomial}
\begin{document}
\begin{abstract}
In this note, we construct a chord index homomorphism from a subgroup of $H_1(\Sigma, \mathbb{Z})$ to the group of chord indices of a knot $K$ in $\Sigma\times I$. Some knot invariants derived from this homomorphism are discussed.
\end{abstract}
\maketitle
\section{Introduction}\label{section1}
For a closed oriented surface $\Sigma$, a knot $K$ in the thickened surface is a smooth embedding of a circle $S^1$ into $\Sigma\times I$, here $I=[0, 1]$. We always assume $K$ is oriented. With respect to the projection from $\Sigma\times I$ to $\Sigma$, each knot has an associated knot diagram, i.e. a connected immersed curve in $\Sigma$ with finitely many overcrossing points and undercrossing points. Two knot diagrams represent the same knot if they are related by a finite sequence of Reidemeister moves. The investigation of knots in thickened surfaces is an important part of knot theory in general 3-manifolds. For knots in thickened surfaces, a systematic study of knot invariants derived from Gauss diagrams was carried out in \cite{Fie2001}. Following this, Grishanov and Vassiliev constructed infinitely many independent degree two invariants for knots in $\Sigma\times I$ \cite{GV2008}, and later in \cite{GV2009} they provided combinatorial formulas for these finite type invariants. The relation of cobordism for knots in $\Sigma\times I$ was introduced by Turaev in \cite{Tur2008}, which has a strong influence on the subsequent study of virtual knot concordance.

Following \cite{BR2021}, the parity axioms for knot diagrams on surfaces can be stated as follows. Consider a knot diagram $D$ on $\Sigma$, we use $C(D)$ to denote the set of crossing points of $D$. For each crossing point $c\in C(D)$, a \emph{parity} is a function $p: C(D)\to \mathbb{Z}_2(=\mathbb{Z}/2\mathbb{Z})$ which assigns 0 or 1 to $c$ such that all the following axioms are satisfied:
\begin{enumerate}
\item If a crossing point $c$ is involved in the first Reidemeister move, then $p(c)=0$;
\item If two crossing points $c_1$ and $c_2$ are involved in the second Reidemeister move, then $c_1$ and $c_2$ have the same parity;
\item If three crossing points $c_1, c_2$ and $c_3$ are involved in the third Reidemeister move, then the parity of each crossing is preserved under the move and $p(c_1)+p(c_2)+p(c_3)=0$ (mod 2).
\item For any crossing point that is not involved in a Reidemeister move, the parity of it is preserved under this move.
\end{enumerate}

For a given parity $p$, we say a crossing $c$ is even if $p(c)=0$, otherwise we say $c$ is odd. We remark that the axioms of parity were first proposed by Manturov in \cite{Man2010} for virtual knots and some other knotted objects, based on the idea of odd writhe invariant introduced by Kauffman in \cite{Kau2004}. As an extension of the parity axioms, we consider the following chord index axioms. By a ($\mathbb{Z}$-valued) \emph{chord index}, we mean a function $f: C(D)\to \mathbb{Z}$ which satisfies the following axioms:
\begin{enumerate}
\item If a crossing point $c$ is involved in the first Reidemeister move, then $f(c)=0$;
\item If two crossing points $c_1$ and $c_2$ are involved in the second Reidemeister move, then $c_1$ and $c_2$ have the same index;
\item If three crossing points $c_1, c_2$ and $c_3$ are involved in the third Reidemeister move, then the indices of them are preserved respectively under the move;
\item For any crossing point that is not involved in a Reidemeister move, the index of it is preserved under this move.
\end{enumerate}

The chord axioms above guarantee that the chord index of each crossing point of a knot diagram behaves quite well under Reidemeister moves. Therefore one can easily obtain a knot invariant for knots in $\Sigma\times I$ with a given chord index.

Consider a fixed oriented knot diagram $D$ on $\Sigma$, let us use $\mathcal{C}_{\Sigma}(D)$ to denote the set of all chord indices of $D$. Obviously, $\mathcal{C}_{\Sigma}(D)$ is an abelian group under the addition $(f_1+f_2)(c)=f_1(c)+f_2(c)$ $(\forall c\in C(D))$ and the trivial chord index $f(c)=0$ $(\forall c\in C(D))$ is the identity. We set $\mathcal{H}_1^D(\Sigma, \mathbb{Z})=\{\alpha\in H_1(\Sigma, \mathbb{Z})|\alpha\cdot[D]=0\}$, which is a subgroup of $H_1(\Sigma, \mathbb{Z})$. Here $[D]$ is the associated element of $D$ in $H_1(\Sigma, \mathbb{Z})$ and $\alpha\cdot[D]$ denotes the algebraic intersection number of the homology classes $\alpha$ and $[D]$. The main result of this note is as follows.

\begin{theorem}\label{theorem1.1}
Let $D$ be an oriented knot diagram on a closed oriented surface $\Sigma$, there exists a homomorphism $h: \mathcal{H}_1^D(\Sigma, \mathbb{Z})\to \mathcal{C}_{\Sigma}(D)$.
\end{theorem}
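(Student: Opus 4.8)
The plan is to produce, for every $\alpha\in\mathcal{H}_1^D(\Sigma,\mathbb{Z})$, a chord index $f_\alpha\in\mathcal{C}_\Sigma(D)$ by a homological recipe, and then to observe that $\alpha\mapsto f_\alpha$ is additive. Fix a crossing $c\in C(D)$. Since $D$ has a single component, the point $c$ together with the orientation cuts $D$ into two arcs; let $\gamma_c$ be the arc that emanates from $c$ along the outgoing over-strand (equivalently, perform the oriented smoothing at $c$ and keep the resulting loop containing the outgoing over-strand). Both endpoints of $\gamma_c$ lie at the point $c$, so $\gamma_c$ is a $1$-cycle and determines a class $[\gamma_c]\in H_1(\Sigma,\mathbb{Z})$; put $f_\alpha(c)=\alpha\cdot[\gamma_c]$. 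Concretely one may represent $\alpha$ by an oriented multicurve $\ell$ transverse to $D$ and missing $C(D)$ and let $f_\alpha(c)$ be the signed count of $\ell\cap\gamma_c$; independence of the choice of $\ell$ is immediate from $\partial\gamma_c=0$. If $\gamma_c'$ denotes the complementary arc, then $[\gamma_c]+[\gamma_c']=[D]$ (together they differ from $D$ by a null-homologous loop at $c$), so $\alpha\cdot[D]=0$ gives $\alpha\cdot[\gamma_c]=-\alpha\cdot[\gamma_c']$; this symmetry is exactly what will make the first axiom hold no matter which of the two arcs is singled out.

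Next I would check the four chord index axioms for $f_\alpha$. The underlying mechanism for axioms (3) and (4) is that a Reidemeister move is supported in a disk and is a homotopy of the underlying immersed curve, so the class in $H_1(\Sigma,\mathbb{Z})$ of any loop based at a crossing point --- in particular each $\gamma_c$, once the short closing arc at $c$ is absorbed --- is unchanged; the three crossings moved by an R3 require nothing extra because their outgoing over-strands, hence the arcs $\gamma_{c_i}$, are carried along by the homotopy. For axiom (1): a crossing $c$ born from an R1 move splits $D$ into the small kink loop, which bounds a disk, and its complement, which is homologous to $D$; according to the kink type $\gamma_c$ is one or the other, and correspondingly $f_\alpha(c)$ equals $\alpha\cdot 0$ or $\alpha\cdot[D]$, both of which vanish, the latter precisely because $\alpha\in\mathcal{H}_1^D(\Sigma,\mathbb{Z})$. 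For axiom (2): in an R2 move the same strand is over at both crossings $c_1,c_2$, so reading $\gamma_{c_1}$ and $\gamma_{c_2}$ off from the outgoing over-strand at either crossing exhibits them as two arcs whose difference, after closing up, is a loop contained in the supporting disk and hence null-homologous; therefore $[\gamma_{c_1}]=[\gamma_{c_2}]$ and $f_\alpha(c_1)=f_\alpha(c_2)$.

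Finally $h(\alpha):=f_\alpha$ lies in $\mathcal{C}_\Sigma(D)$ by the above, and bilinearity of the intersection pairing yields $f_{\alpha+\beta}(c)=(\alpha+\beta)\cdot[\gamma_c]=f_\alpha(c)+f_\beta(c)$ for all $c$, with $f_0$ the trivial index; hence $h$ is a well-defined homomorphism. I expect the main obstacle to be axiom (2): one must commit to a precise over/under-and-orientation convention for $\gamma_c$ and then carry out the bigon computation in both oriented versions of the R2 move --- the cyclic order of the four preimages of $c_1,c_2$ on $D$ differs between the two cases, but in each the long arc cancels and only a disk-bounded loop survives. The role of the standing hypothesis $\alpha\cdot[D]=0$ is confined to axiom (1), which is why the domain of $h$ must be $\mathcal{H}_1^D(\Sigma,\mathbb{Z})$ rather than all of $H_1(\Sigma,\mathbb{Z})$.
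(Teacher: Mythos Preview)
Your proposal is correct and arrives at the same homomorphism as the paper (your $\gamma_c$ is precisely one of the smoothed loops $D_c^l,D_c^r$, so your $f_\alpha(c)=\alpha\cdot[\gamma_c]$ agrees, up to a harmless global sign, with the formula $f_\gamma(c)=w(c)\big([\gamma]\cdot[D_c^r]\big)$ of Remark~2.4). The route, however, is organized differently. The paper first picks a representative curve $\gamma$ for $\alpha$, defines an integer coloring of the arcs of $D$ from the signed intersections with $\gamma$, and sets $f_\gamma(c)$ to be the over--under color difference; the chord index axioms are then checked by analyzing Reidemeister moves that may drag $\gamma$ through the local disk (Lemma~2.2), and only afterwards is the homological reinterpretation (Lemma~2.3) used to show independence of the chosen $\gamma$ and additivity in $\alpha$. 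You bypass the coloring entirely: starting from the homology class $\alpha$ and the intersection pairing, well-definedness is automatic, additivity is bilinearity, and the axioms reduce to the observation that a Reidemeister move is a homotopy supported in a disk, so each $[\gamma_c]$ is unchanged. Your remark that the hypothesis $\alpha\cdot[D]=0$ is used only for axiom~(1) is a clean byproduct of this viewpoint. What the paper's coloring description buys is a concrete combinatorial recipe that is convenient for hand computation in examples and makes the link to the $\mathscr{C}$-parity of Boden--Rushworth transparent; what your approach buys is a shorter and representative-free argument.
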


The article is organized as follows. In Section \ref{section2} we explain how to define a chord index $f_{\gamma}$ for a given closed curve $\gamma$ satisfying $[\gamma]\in\mathcal{H}_1^D(\Sigma, \mathbb{Z})$. Then we show that $f_{\gamma}$ only depends on the homology class $[\gamma]$. Based on this chord index, we define a knot invariant $W_K^{\gamma}(t)$ for a given knot $K$ in $\Sigma\times I$. Section \ref{section3} is devoted to study some basic properties of the chord index $f_{\gamma}$ and the knot invariant $W_K^{\gamma}(t)$. Section \ref{section4} contains a generalized version of the chord index $f_{\gamma}$, which take values in the group ring $\mathbb{Z}H_1(\Sigma, \mathbb{Z})$. Knots in thickened surfaces are closely connected with virtual knots, however the invariant $W_K^{\gamma}(t)$ defined in Section \ref{section2} cannot be defined for virtual knots directly. Some applications of the chord index $f_{\gamma}$ in virtual knot theory are given in Section \ref{section5}.

\section{Chord index derived from $\mathcal{H}_1^D(\Sigma, \mathbb{Z})$}\label{section2}
The main idea of defining a chord index with respect to a closed curve is inspired by the $\mathscr{C}$-parity introduced by Boden and Rushworth in \cite{BR2021}. Consider an oriented knot diagram $D$ on a closed oriented surface $\Sigma$, choose a closed oriented curve $\gamma$ on $\Sigma$ such that $[\gamma]\in\mathcal{H}_1^D(\Sigma, \mathbb{Z})$. Perturb $D$ or $\gamma$ so that $D$ meets $\gamma$ transversally in $n$ points. These $n$ points divide the knot diagram $D$ into $n$ parts. At a point $x\in D\cap\gamma$, without loss of generalization, we say $x$ is positive if the pair of tangent vectors of $\gamma$ and $D$ at $x$ orients $T_x\Sigma$. Otherwise, we say $x$ is negative. The coloring rule is when we walk along $D$ according to the orientation, the integer increases(decreases) by one if we meet a positive(negative) intersection point. The local condition of the coloring is illustrated in Figure \ref{figure1}.
\begin{figure}[h]
\centering
\includegraphics{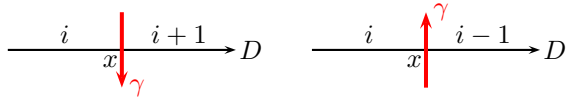}\\
\caption{A coloring of $D$ with respect to $\gamma$}\label{figure1}
\end{figure}

\begin{lemma}\label{lemma2.1}
The coloring above is well-defined modulo the addition of a constant integer.
\end{lemma}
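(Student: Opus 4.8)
The plan is to regard the coloring as an assignment of an integer to each of the $n$ arcs into which the points of $D\cap\gamma$ cut $D$ (if $n=0$ the single circle $D$ receives one constant label and there is nothing to prove). The coloring rule is purely local: for the two arcs adjacent along $D$ at an intersection point $x$, it prescribes the \emph{difference} of their labels, namely $+1$ or $-1$ according as $x$ is positive or negative. Hence, once the label of one arc is fixed, the label of every other arc is forced by propagating this rule along $D$ in the direction of its orientation. The only thing to verify is that this propagation is consistent, i.e.\ that traversing $D$ once and returning to the initial arc produces no net change.

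First I would list the intersection points $x_1,\dots,x_n$ in the cyclic order in which they are met while traversing $D$ according to its orientation, with local signs $\varepsilon_1,\dots,\varepsilon_n\in\{\pm 1\}$ as in Figure \ref{figure1}. Starting from an arc with label $a$, after passing $x_1,\dots,x_k$ the current arc carries label $a+\varepsilon_1+\cdots+\varepsilon_k$; going all the way around, the net change upon returning to the starting arc is $\sum_{i=1}^n\varepsilon_i$. Therefore a consistent coloring exists, and is unique up to the choice of $a$, if and only if $\sum_{i=1}^n\varepsilon_i=0$.

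Next I would identify this sum homologically. By the definition of the signs, $\varepsilon_i$ is exactly the local intersection sign of the oriented curves $\gamma$ and $D$ at $x_i$, so $\sum_{i=1}^n\varepsilon_i$ equals the algebraic intersection number $[\gamma]\cdot[D]$ in $H_1(\Sigma,\mathbb{Z})$; here one uses that $D$ and $\gamma$ were perturbed to meet transversally and that the pairing on $H_1$ of a closed oriented surface is computed by summing local signs over any transverse representatives. Since $[\gamma]\in\mathcal{H}_1^D(\Sigma,\mathbb{Z})$ says precisely that $[\gamma]\cdot[D]=0$, the consistency condition holds.

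Finally I would conclude that the coloring is well-defined and that any two colorings produced by the rule differ by a global constant integer, which is the assertion of the lemma. I do not anticipate a serious obstacle here; the only point needing a little care is the sign bookkeeping in the identification of $\sum_{i=1}^n\varepsilon_i$ with $[\gamma]\cdot[D]$, together with the harmless degenerate case $n=0$.
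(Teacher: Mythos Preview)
Your argument is correct and follows exactly the same approach as the paper's proof: fix a label on one arc, propagate along $D$, and use $[\gamma]\cdot[D]=0$ to see the coloring closes up consistently. The paper states this in two sentences while you spell out the sign bookkeeping and the identification $\sum_i\varepsilon_i=[\gamma]\cdot[D]$ explicitly, but there is no substantive difference.
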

\begin{proof}
One can freely choose any part of $D$ and assigns an integer to it. The assumption $[\gamma]\cdot[D]=0$ guarantees that when we return to the beginning point, the new coloring coincides with the original one.
\end{proof}

\begin{definition}
Choose a closed curve $\gamma\subset\Sigma$ which satisfies $[\gamma]\in\mathcal{H}_1^D(\Sigma, \mathbb{Z})$, we define a map $f_{\gamma}: C(D)\to\mathbb{Z}$ which assigns the difference between the integer on the over-arc and the integer on the under-arc to each crossing point.
\end{definition}

Note that although the coloring induced by $\gamma$ is not well-defined, Lemma \ref{lemma2.1} tells us that the assignment $f_{\gamma}(c)$ is well-defined. We will see soon that $f_{\gamma}$ is a chord index.

\begin{lemma}\label{lemma2.2}
The assignment $f_{\gamma}$ satisfies all the chord index axioms, hence it is a chord index.
\end{lemma}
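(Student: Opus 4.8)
The plan is to verify the four chord index axioms one at a time, in each case tracking how the $\gamma$-coloring of the arcs of $D$ behaves under the relevant Reidemeister move. Throughout I would fix a coloring of $D$ (well-defined up to a global constant by Lemma \ref{lemma2.1}), perform the move, and observe that the local picture away from the move is unchanged, so only the arcs participating in the move need attention. The key point used repeatedly is that $f_\gamma(c)$ is a \emph{difference} of the integers on the two strands meeting at $c$, so it is insensitive to the global additive ambiguity; and that an arc crossing $\gamma$ transversally changes its label by $\pm 1$ according to the sign of the intersection point.

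For axiom (1), a crossing $c$ created by a first Reidemeister move is formed by a small kink; the over-arc and under-arc at $c$ are the two sides of the same short piece of $D$ that bounds a disk. One may isotope the kink so that the loop it forms is disjoint from $\gamma$ (the loop is null-homotopic, hence can be pushed off $\gamma$), whence the two strands at $c$ carry the same integer and $f_\gamma(c)=0$. For axiom (2), the two crossings $c_1,c_2$ produced by a second Reidemeister move involve one strand passing over (or under) another along a bigon that can likewise be taken disjoint from $\gamma$; then the over-strand carries a single integer $a$ along the whole bigon and the under-strand a single integer $b$, so $f_\gamma(c_1)=a-b=f_\gamma(c_2)$. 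For axiom (4), a crossing not involved in the move keeps the same two local strands, and although the integers on those strands may change by a common global shift, their difference — and hence $f_\gamma$ of that crossing — is unchanged; more carefully, one checks the coloring can be chosen to agree outside a disk containing the move, so the arcs at an uninvolved crossing keep their exact labels.

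The main work, and the step I expect to be the real obstacle, is axiom (3): the third Reidemeister move. Here three strands bound a triangle, and the move slides one strand across the opposite crossing. I would again arrange the triangle to be disjoint from $\gamma$, so each of the three strands carries a constant integer, say $a$, $b$, $c$ (top, middle, bottom in the over/under order), throughout the region of the move. Then each of the three crossings before the move is a difference of two of these three integers, and one checks directly from the two standard pictures of the $R3$ move that the same three differences reappear after the move, attached to the ``same'' crossings under the natural correspondence (the crossing of strand $i$ with strand $j$ persists, only its position moves). Thus the indices of $c_1,c_2,c_3$ are individually preserved. The delicate part is bookkeeping the over/under relations so that one confirms the correspondence of crossings is exactly index-preserving and not merely a permutation; I would do this by drawing both versions of the $R3$ configuration and labeling all six arcs, and I would also note that no additional intersection with $\gamma$ is forced by the move since the swept triangle is contractible and can be isotoped off $\gamma$. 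Having checked all four axioms, $f_\gamma$ is a chord index, proving Lemma \ref{lemma2.2}.
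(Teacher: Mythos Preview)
Your overall plan is reasonable, but the recurring step ``arrange the kink/bigon/triangle to be disjoint from $\gamma$'' is not justified as written and is where the argument breaks. You are proposing to isotope the relevant portion of $D$ across $\gamma$ and then read off $f_\gamma(c)$ in the moved diagram; but such an isotopy changes the intersections of $D$ with $\gamma$ and hence the coloring, and you have not checked that $f_\gamma(c)$ survives it. That invariance is precisely what the lemma asserts, so invoking it here is circular. (If instead you meant to isotope $\gamma$ off the disk, you would be assuming $f_\gamma$ depends only on $[\gamma]$, which in the paper is established only afterwards via Lemma~\ref{lemma2.3}.) Your parenthetical ``the loop is null-homotopic'' is the right observation, but the correct way to use it is not ``hence it can be pushed off $\gamma$''; rather, null-homotopic implies zero \emph{algebraic} intersection with $\gamma$, and since the color change along any arc equals the signed count of its intersections with $\gamma$, the net change around the loop is zero. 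Phrased that way the argument for axiom~(1) goes through with no isotopy; phrased as ``push off $\gamma$'' it does not.

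The paper handles exactly this issue by a different organization. It first observes that if $\gamma$ happens to miss the small disk supporting the Reidemeister move, then the coloring inside the disk is constant on each strand and the verification is immediate (your argument applies verbatim in this case). When $\gamma$ does enter the disk, the paper does not try to remove it; instead it regards $D\cup\gamma$ as a two-component diagram and factors the local change into a finite sequence of Reidemeister moves of this union. Since $\Omega_1$ involves only one component, only the mixed $\Omega_2$ and $\Omega_3$ between a strand of $D$ and a strand of $\gamma$ need to be checked, and these are verified explicitly (Figure~\ref{figure2}) by computing the boundary labels. In effect the paper carries out the very check that your ``isotope off $\gamma$'' step silently assumes. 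So your approach is not wrong in spirit, but the missing piece---the behavior of the coloring when $D$ is slid across $\gamma$---is the actual content of the proof, and the paper supplies it.
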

\begin{proof}
It suffices to check each Reidemeister moves respectively. Notice that each Reidemeister move happens in a small disk of $\Sigma$. If $\gamma$ does not appear in this disk, then according to the coloring rule, the color of each arc in this disk is preserved respectively. It is easy to observe that $f_{\gamma}$ is a chord index.

If $\gamma$ does appear in the disk, the move involving $\gamma$ could be quite complicated. However, if we consider $D$ and $\gamma$ as an entirety, this move can be decomposed into a finite number of Reidemeister moves involving both $D$ and $\gamma$. Therefore it is sufficient to check all the cases of this kind of Reidemeister moves.

Since the first Reidemeister move concerns only one component, we only need the check the second and the third Reidemeister moves. In Figure \ref{figure2}, we list all the cases that both $D$ and $\gamma$ are both involved. Similar to Figure \ref{figure1}, here the curves $\gamma$ are depicted in red. In each case, we find that the integers associated to the endpoints on the boundary of the small disk are invariant and the chord indices of all crossing points of $D$ are preserved. For example, the two crossing points on the bottom left of Figure \ref{figure2} both have index $(j+1)-(i+1)=j-i$. Note that the orientations of some arcs in Figure \ref{figure2} can be reversed and the two crossing points can be switched, the verification for any of these cases is analogous.
\begin{figure}[h]
\centering
\includegraphics{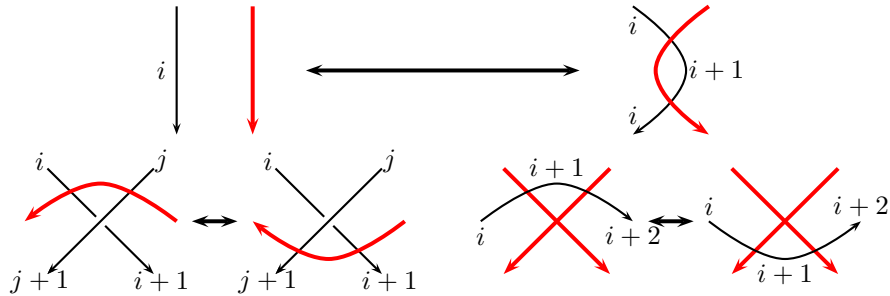}\\
\caption{Reidemeister moves involving both $D$ and $\gamma$}\label{figure2}
\end{figure}
\end{proof}

Before proving Theorem \ref{theorem1.1}, we need another interpretation of the chord index $f_{\gamma}(c)$. This definition can be regarded as an extension of the topological definition given in \cite{FK2013} and \cite{BCG2019}. Consider a crossing point $c\in C(D)$, by smoothing $c$ according to the orientation we obtain two knot diagrams, say $D_c^l$ and $D_c^r$, see Figure \ref{figure3}.
\begin{figure}[h]
\centering
\includegraphics{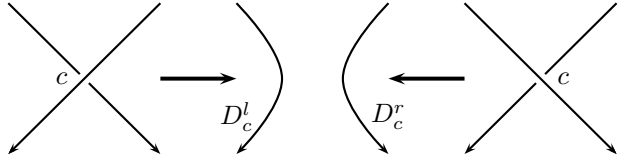}\\
\caption{Smoothing a crossing point $c$}\label{figure3}
\end{figure}

\begin{lemma}\label{lemma2.3}
Let $c$ be a crossing point of an oriented knot diagram $D$ on $\Sigma$, $\gamma$ a closed curve that satisfies $[\gamma]\in\mathcal{H}_1^D(\Sigma, \mathbb{Z})$, then
$$f_{\gamma}(c)=
\begin{cases}
[\gamma]\cdot[D_c^r],& \text{if } w(c)=+1;\\
[\gamma]\cdot[D_c^l],& \text{if } w(c)=-1.
\end{cases}$$
Here $w(c)$ denotes the writhe of the crossing point $c$.
\end{lemma}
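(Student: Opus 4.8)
The plan is to compute $f_{\gamma}(c)$ directly from the coloring that defines it. Recall that $f_{\gamma}(c)$ is the integer assigned to the over-arc at $c$ minus the integer assigned to the under-arc at $c$, for the $\gamma$-coloring of $D$ (well defined up to a global constant by Lemma \ref{lemma2.1}, so the difference is well defined). The key local observation is that near $c$ the two strands of $D$ are pieces of one circle, so if we leave $c$ along the outgoing over-strand and follow $D$ with its orientation, we must next return to $c$ along the incoming under-strand, having traced out an arc $\alpha\subset D$ (the other possibility, returning along the incoming over-strand, would force $D$ to pass over at $c$ twice). Since the two short passages through the disk around $c$ carry no intersection points with $\gamma$, the coloring is constant on each of them; and by the coloring rule of Figure \ref{figure1} the coloring integer changes along $\alpha$ by exactly the signed number of points of $\gamma\cap\alpha$. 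Hence $f_{\gamma}(c)$ equals $\pm$ this signed count.

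Next I would identify this count with an intersection number. Smoothing $c$ along the orientation splits the circle $D$ into two components $D_c^l\sqcup D_c^r$ (Figure \ref{figure3}), and the arc $\alpha$ together with the short smoothing arc inside the disk around $c$ is precisely one of these two components, say $D_c^{\bullet}$. Because that short arc is disjoint from $\gamma$, the signed count of $\gamma\cap\alpha$ is exactly $[\gamma]\cdot[D_c^{\bullet}]$. Carrying out this bookkeeping in the local pictures with $w(c)=+1$ and with $w(c)=-1$ — which also pins down the sign $\pm$ left open in the first step — gives $f_{\gamma}(c)=[\gamma]\cdot[D_c^r]$ in the first case and $f_{\gamma}(c)=[\gamma]\cdot[D_c^l]$ in the second, which is the assertion. (In the degenerate case where the over-arc and under-arc at $c$ are the same arc, both sides are $0$.)

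As a sanity check, one observes that $D_c^l$ and $D_c^r$ agree with $D$ away from a disk, so $[D]=[D_c^l]+[D_c^r]$ in $H_1(\Sigma,\mathbb{Z})$; together with the standing hypothesis $[\gamma]\cdot[D]=0$ this gives $[\gamma]\cdot[D_c^l]=-[\gamma]\cdot[D_c^r]$, so the two cases of the formula differ exactly by the sign of $w(c)$, as they must. (It also makes transparent that $f_{\gamma}$ depends only on $[\gamma]$, which will be useful for Theorem \ref{theorem1.1}.)

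The step that needs care is the sign bookkeeping in the middle paragraph: reconciling the paper's convention for a ``positive/negative intersection point'' in the coloring rule with the algebraic intersection pairing $[\gamma]\cdot[-]$, determining for each sign of $w(c)$ which of the two admissible local reconnections the oriented smoothing performs, and hence which sub-arc of $D$ — the one closing up to $D_c^l$ or the one closing up to $D_c^r$ — is swept out on the way from the over-arc to the under-arc at $c$. This is a finite verification over the handful of local configurations at a crossing (the two writhes, together with the possible orientations of the four local strands), and amounts to reading off Figures \ref{figure1} and \ref{figure3}; no global input beyond $[\gamma]\cdot[D]=0$ is required.
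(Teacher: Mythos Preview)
Your proof is correct and follows essentially the same approach as the paper's: both compute $f_{\gamma}(c)$ by tracking how the coloring changes along the arc of $D$ running from the over-strand to the under-strand at $c$, and identify that signed count with $[\gamma]\cdot[D_c^r]$ (respectively $[D_c^l]$). The paper's argument is much terser---it simply asserts that each signed intersection of $\gamma$ with $D_c^r$ contributes $\pm 1$ to the difference and leaves the negative-writhe case to the reader---whereas you spell out the arc-tracing and explicitly flag the sign bookkeeping as the step requiring care; your sanity check via $[D]=[D_c^l]+[D_c^r]$ is a nice addition not in the paper (it appears separately as Remark~\ref{remark2.4}).
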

\begin{proof}
Let us assume $w(c)=+1$, the negative case can be proved similarly. Since $f_{\gamma}(c)$ is defined as the integer on the over-arc minus the integer on the under-arc, notice that each positive(negative) intersection point between $\gamma$ and $D_c^r$ increases(decreases) this difference by one, the result follows directly.
\end{proof}

\begin{remark}\label{remark2.4}
We would like to remark that the chord index $f_{\gamma}(c)$ can be rewritten as
\begin{center}
$f_{\gamma}(c)=w(c)([\gamma]\cdot[D_c^r])$,
\end{center}
since $[\gamma]\cdot([D_c^l]+[D_c^r])=\gamma\cdot[D]=0$.
\end{remark}

Now we turn to the proof of Theorem \ref{theorem1.1}.
\begin{proof}
For arbitrary element $\alpha\in\mathcal{H}_1^D(\Sigma, \mathbb{Z})$, choose an oriented closed curve $\gamma$ such that $[\gamma]=\alpha$. In general, $\gamma$ is not a simple closed curve, since a nonzero homology class can be represented by a simple closed curve if and only if it is primitive \cite{Mey1976}.

Now for each crossing point $c\in C(D)$, we define the \emph{chord index homomorphism} as $h(\alpha)(c)=f_{\gamma}(c)$. We claim this definition is well-defined. Actually, if $\gamma'$ is another closed curve representing $\alpha$, then
\begin{center}
$f_{\gamma}(c)-f_{\gamma'}(c)=w(c)([\gamma]\cdot[D_c^r])-w(c)([\gamma']\cdot[D_c^r])=w(c)(([\gamma]-[\gamma'])\cdot[D_c^r])=0$.
\end{center}

In order to see that $h$ is a homomorphism, we choose another element $\beta\in\mathcal{H}_1^D(\Sigma, \mathbb{Z})$ and an oriented closed curve $\delta$ representing $\beta$. It follows that
\begin{center}
$h(\alpha+\beta)(c)=f_{\gamma\cup\delta}(c)=w(c)([\gamma\cup\delta]\cdot[D_c^r])=w(c)([\gamma]\cdot[D_c^r]+[\delta]\cdot[D_c^r])=h(\alpha)(c)+h(\beta)(c)$.
\end{center}
The proof is finished.
\end{proof}

\begin{remark}\label{remark2.5}
As we mentioned in the proof above, in general the curve $\gamma$ cannot be chosen as a simple closed curve. However, by oriented smoothing all the self-intersection points of $\gamma$, one obtains a finite number of disjoint simple closed curves $\cup\gamma_i$. Since $[\gamma]=\sum\limits_i[\gamma_i]$, if we use $\cup\gamma_i$ to define a chord index, then we have $f_{\cup\gamma_i}=f_{\gamma}$. Therefore, it is not necessary to realize $\alpha\in\mathcal{H}_1^D(\Sigma, \mathbb{Z})$ by a connected closed curve.
\end{remark}

For an oriented knot $K$ in $\Sigma\times I$, the homology class $[K]$ does not depend on the choice of the knot diagram $D$. Hence it is safe to write $\mathcal{H}_1^K(\Sigma, \mathbb{Z})$ instead of $\mathcal{H}_1^D(\Sigma, \mathbb{Z})$. With a given element $\alpha\in\mathcal{H}_1^K(\Sigma, \mathbb{Z})$, by choosing an oriented closed curve $\gamma$ that represents $\alpha$, one obtains a chord index $f_{\gamma}$. Now we can associate a polynomial $W_K^{\gamma}(t)\in\mathbb{Z}[t, t^{-1}]$ to the knot $K$, which is defined as
\begin{center}
$W_K^{\gamma}(t)=\sum\limits_{f_{\gamma}(c)\neq0}w(c)t^{f_{\gamma}(t)}$.
\end{center}
We call this polynomial the \emph{writhe polynomial} of $K$ with respect to $\gamma$, or $[\gamma]$ more precisely, adopting the terminology used in \cite{CG2013}. The relation between $W_K^{\gamma}(t)$ and the writhe polynomial of virtual knots will be discussed in Section \ref{section5}.

We end this section with a simple example.

\begin{example}\label{example2.6}
Consider the knot $K\subset T^2\times I$ and the closed curve $\gamma$ depicted in red, see Figure \ref{figure4}. Let us use $c_1$ and $c_2$ to denote the two crossing points of $K$. Direct calculation shows that $f_{\gamma}(c_1)=-1$ and $f_{\gamma}(c_2)=1$. It follows that the writhe polynomial of $K$ with respect to $\gamma$ has the form $W_K^{\gamma}(t)=-t-t^{-1}$. Recall that instead of one closed curve, we can also use a finite family of closed curve to represent an element in $\mathcal{H}_1^K(\Sigma, \mathbb{Z})$, see Remark \ref{remark2.5}. If we replace $\gamma$ with $n$ simple closed curves that are all parallel to $\gamma$, then the writhe polynomial $W_K^{n\gamma}(t)=-t^n-t^{-n}$.
\begin{figure}[h]
\centering
\includegraphics{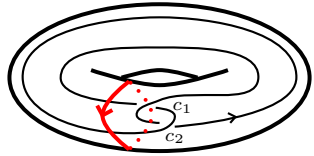}\\
\caption{A null-homologous knot in $T^2\times I$}\label{figure4}
\end{figure}
\end{example}

\section{Some properties of $f_{\gamma}$ and $W_K^{\gamma}(t)$}\label{section3}
In this section we study some basic properties of the chord index $f_{\gamma}$ and the associated writhe polynomial $W_K^{\gamma}(t)$. For the sake of simplicity, we will often abuse our notation in that we denote by $K$ both a knot diagram and the knot it represents.

\begin{proposition}\label{proposition3.1}
Let $K$ be an oriented knot in $\Sigma\times I$ and $\gamma$ an oriented closed curve on $\Sigma$ which satisfies $[\gamma]\in\mathcal{H}_1^K(\Sigma, \mathbb{Z})$. Then we have $f_{-\gamma}=-f_{\gamma}$ and $W_K^{-\gamma}(t)=W_K^{\gamma}(t^{-1})$. In particular, if $[\gamma]=0$, then $f_{\gamma}$ is the trivial chord index.
\end{proposition}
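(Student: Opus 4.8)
The plan is to deduce everything from the closed formula $f_{\gamma}(c)=w(c)\bigl([\gamma]\cdot[D_c^r]\bigr)$ recorded in Remark \ref{remark2.4}, together with the bilinearity of the algebraic intersection pairing on $H_1(\Sigma,\mathbb{Z})$. So the first thing I would check is that $f_{-\gamma}$ is even a legitimate chord index: reversing the orientation of $\gamma$ replaces $[\gamma]$ by $[-\gamma]=-[\gamma]$, and $[-\gamma]\cdot[D]=-\bigl([\gamma]\cdot[D]\bigr)=0$, so $[-\gamma]\in\mathcal{H}_1^K(\Sigma,\mathbb{Z})$ and Lemma \ref{lemma2.2} applies. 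Then for every crossing point $c$,
\[
f_{-\gamma}(c)=w(c)\bigl([-\gamma]\cdot[D_c^r]\bigr)=w(c)\bigl(-[\gamma]\cdot[D_c^r]\bigr)=-f_{\gamma}(c),
\]
which is the first assertion. (Equivalently, one may simply invoke the chord index homomorphism $h$ of Theorem \ref{theorem1.1}: $f_{-\gamma}=h(-[\gamma])=-h([\gamma])=-f_{\gamma}$.)

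The statement about the writhe polynomial is then purely formal. Since $f_{-\gamma}(c)\neq 0$ precisely when $f_{\gamma}(c)\neq 0$, substituting $f_{-\gamma}(c)=-f_{\gamma}(c)$ into the definition gives
\[
W_K^{-\gamma}(t)=\sum_{f_{-\gamma}(c)\neq 0}w(c)\,t^{f_{-\gamma}(c)}=\sum_{f_{\gamma}(c)\neq 0}w(c)\,t^{-f_{\gamma}(c)}=W_K^{\gamma}(t^{-1}).
\]
For the last claim, if $[\gamma]=0$ then bilinearity of the intersection pairing yields $[\gamma]\cdot[D_c^r]=0$ for every $c$, whence $f_{\gamma}(c)=w(c)\cdot 0=0$; that is, $f_{\gamma}$ is the trivial chord index. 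Alternatively this is immediate from the well-definedness established in the proof of Theorem \ref{theorem1.1}, by representing $0\in\mathcal{H}_1^K(\Sigma,\mathbb{Z})$ by a small null-homotopic loop disjoint from $D$, along which the induced coloring is constant.

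I do not anticipate a genuine obstacle here: once Remark \ref{remark2.4} is in hand, the proposition reduces to bookkeeping. The only point deserving a moment's care is the elementary observation that reversing the orientation of the curve corresponds to negation in $H_1(\Sigma,\mathbb{Z})$ and that the quantities $[\gamma]\cdot[D_c^r]$ depend $\mathbb{Z}$-linearly on $[\gamma]$, after which both equalities and the degenerate case follow mechanically.
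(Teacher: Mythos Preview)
Your proof is correct and essentially coincides with the paper's: the paper derives $f_{-\gamma}=-f_{\gamma}$ either from the homomorphism $h$ of Theorem~\ref{theorem1.1} or by observing that negating the coloring gives a coloring for $-\gamma$, and then reads off the writhe polynomial identity from the definition---exactly your argument, with your use of the formula in Remark~\ref{remark2.4} being a transparent unpacking of the homomorphism step. Your treatment of the case $[\gamma]=0$ is more explicit than the paper's (which leaves it implicit), but the reasoning is the same.
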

\begin{proof}
The first equality follows immediately from the fact that $h$ is a homomorphism. Or more directly, one observes that replacing each integer on the knot diagram with its opposite provides a coloring of the same diagram with respect to $-\gamma$. The second equality follows directly from the definition.
\end{proof}

Next we turn to use $W_K^{\gamma}(t)$ to investigate the symmetry properties of the knot $K$.

\begin{proposition}\label{proposition3.2}
Let $K$ be an oriented knot in $\Sigma\times I$ and $r(K)$ be the knot obtained from $K$ by reversing the orientation, then for any $[\gamma]\in\mathcal{H}_1^K(\Sigma, \mathbb{Z})$, we have $W_{r(K)}^{\gamma}(t)=W_K^{\gamma}(t)$.
\end{proposition}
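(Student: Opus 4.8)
The plan is to show that the two ingredients of $W_K^{\gamma}(t)$ at each crossing — the writhe $w(c)$ and the chord index $f_{\gamma}(c)$ — are individually unchanged when $K$ is replaced by $r(K)$, so that the two polynomials agree term by term. First I would observe that the statement is well posed: since $[r(K)]=-[K]$ we have $\mathcal{H}_1^{r(K)}(\Sigma,\mathbb{Z})=\mathcal{H}_1^{K}(\Sigma,\mathbb{Z})$, so the admissible curves $\gamma$ are the same, and $\gamma$ itself is \emph{not} reversed. The writhe part is then immediate: the sign of a classical crossing is invariant under simultaneously reversing the orientations of both strands through it, and reversing the orientation of $K$ does not change which arc at a crossing is the over-arc and which is the under-arc.

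The core of the argument is that the coloring of Lemma~\ref{lemma2.1} is essentially unchanged. Fix a diagram $D$, list the intersection points $D\cap\gamma$ as $x_1,\dots,x_n$ in the cyclic order determined by the orientation of $D$, with signs $\varepsilon_i\in\{\pm1\}$, and write $A_i$ for the arc of $D$ from $x_i$ to $x_{i+1}$ (indices mod $n$). Reversing the orientation of $D$ does two things at once: it reverses the order in which the arcs are traversed, and it flips the sign of every intersection point (the tangent vector of $D$ is reversed while that of $\gamma$ is not). I would check that these two effects combine so that the coloring $c_{\bar{o}}$ associated with $r(K)$ obeys the same step relation $c(A_i)-c(A_{i-1})=\varepsilon_i$ as the original coloring $c_{o}$; since $\sum_i\varepsilon_i=[\gamma]\cdot[D]=0$ both colorings are well defined, and hence $c_{\bar{o}}=c_{o}+(\text{const})$. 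Taking the difference of the colors on the over- and under-arc at any crossing $c$ cancels the constant, so $f_{\gamma}(c)$ is the same for $K$ and $r(K)$, and therefore $W_{r(K)}^{\gamma}(t)=W_K^{\gamma}(t)$. As an alternative one can argue from Remark~\ref{remark2.4}: the oriented smoothing at $c$ is intrinsic, so reversing $K$ reverses the orientations of $D_c^l$ and $D_c^r$ and interchanges their labels, and the identity $[\gamma]\cdot([D_c^l]+[D_c^r])=0$ again gives $f_{\gamma}(c)$ unchanged.

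I expect the one delicate point to be the sign bookkeeping in the coloring argument: one must verify that ``reverse the traversal order'' and ``flip every crossing sign'' genuinely cancel to an honest additive-constant shift of the coloring, rather than producing a reflection $c\mapsto(\text{const})-c$; it is exactly here that $[\gamma]\cdot[D]=0$ is used, in the form $\sum_i\varepsilon_i=0$. In the alternative approach the analogous subtlety is confirming that the labels $l$ and $r$ of the two smoothed components are \emph{swapped}, not preserved, under orientation reversal. Once that point is pinned down the proof is routine.
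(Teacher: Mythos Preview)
Your argument is correct and is exactly the paper's: the paper simply observes that the coloring for $K$ is also a valid coloring for $r(K)$ (your computation that the two step relations coincide) and that the writhe of each crossing is unchanged. One small clarification: the shift-versus-reflection issue you flag is resolved purely by the local step computation and does not require $\sum_i\varepsilon_i=0$; that hypothesis is only what makes either coloring globally well-defined (Lemma~\ref{lemma2.1}), and your alternative route via Remark~\ref{remark2.4} is a valid variant not used in the paper.
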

\begin{proof}
It suffices to notice that for any fixed $\gamma$, the coloring for $K$ also gives a coloring for $r(K)$. Together with the fact that the same crossing point in $K$ and $r(K)$ have the same writhe, the result follows immediately.
\end{proof}

Proposition \ref{proposition3.2} implies that we can not use $W_K^{\gamma}(t)$ to detect whether a knot in $\Sigma\times I$ is invertible directly. Nevertheless, if we make the choice of the curve $\gamma$ more flexible, it is still possible to tell the difference between $K$ and $r(K)$ by using the writhe polynomial.

\begin{corollary}\label{corollary3.3}
Let $K$ and $r(K)$ be as in Proposition \ref{proposition3.2}, for any closed curves $\gamma_K$ and $\gamma_{r(K)}$ which satisfy $[\gamma_K]=[K]$ and $[\gamma_{r(K)}]=[r(K)]$, we have $W_{r(K)}^{\gamma_{r(K)}}(t)=W_K^{\gamma_K}(t^{-1})$.
\end{corollary}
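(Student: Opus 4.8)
The plan is to combine the two earlier results whose hypotheses match the two halves of the statement: Proposition \ref{proposition3.2}, which says $W_{r(K)}^{\gamma}(t)=W_K^{\gamma}(t)$ for a \emph{fixed} curve $\gamma$, and Proposition \ref{proposition3.1}, which says $W_K^{-\gamma}(t)=W_K^{\gamma}(t^{-1})$. The first step is to observe that $[r(K)]=-[K]$ in $H_1(\Sigma,\mathbb{Z})$, simply because reversing the orientation of a knot negates the homology class it carries. Consequently, if $\gamma_{r(K)}$ is any closed curve with $[\gamma_{r(K)}]=[r(K)]=-[K]$, then $-\gamma_{r(K)}$ (the same curve with reversed orientation) satisfies $[-\gamma_{r(K)}]=[K]=[\gamma_K]$.

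The second step is to invoke well-definedness of the writhe polynomial in the curve, which is exactly the content of the well-definedness part of Theorem \ref{theorem1.1}: $W_K^{\gamma}(t)$ depends only on the homology class $[\gamma]\in\mathcal{H}_1^K(\Sigma,\mathbb{Z})$. Hence from $[-\gamma_{r(K)}]=[\gamma_K]$ we get $W_{r(K)}^{-\gamma_{r(K)}}(t)=W_{r(K)}^{\gamma_K}(t)$. Now apply Proposition \ref{proposition3.2} to the fixed curve $\gamma_K$ (noting $[\gamma_K]=[K]\in\mathcal{H}_1^K(\Sigma,\mathbb{Z})=\mathcal{H}_1^{r(K)}(\Sigma,\mathbb{Z})$, since $\mathcal{H}_1^K$ only depends on $[K]$ up to sign) to conclude $W_{r(K)}^{\gamma_K}(t)=W_K^{\gamma_K}(t)$. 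Chaining these, $W_{r(K)}^{-\gamma_{r(K)}}(t)=W_K^{\gamma_K}(t)$.

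The final step is to undo the orientation reversal of the curve using Proposition \ref{proposition3.1} applied to the knot $r(K)$: $W_{r(K)}^{-\gamma_{r(K)}}(t)=W_{r(K)}^{\gamma_{r(K)}}(t^{-1})$. Substituting $t\mapsto t^{-1}$ into the identity from the previous paragraph, or equivalently reading it the other way, yields $W_{r(K)}^{\gamma_{r(K)}}(t)=W_K^{\gamma_K}(t^{-1})$, which is the claim.

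The only genuinely delicate point — and the step I would be most careful about — is the bookkeeping of orientations and signs: making sure that ``reverse the orientation of the knot'' really corresponds to negating $[K]$, that $\mathcal{H}_1^K$ is insensitive to this sign so that all four curves live in the correct subgroup, and that the two transformations ``$\gamma\mapsto-\gamma$'' and ``$t\mapsto t^{-1}$'' are composed in the right order rather than cancelling. None of this is hard, but it is exactly the kind of place where a stray sign flips $W_K^{\gamma_K}(t^{-1})$ into $W_K^{\gamma_K}(t)$ and collapses the corollary into Proposition \ref{proposition3.2}. Everything else is a formal consequence of the two cited propositions together with the homological well-definedness already established.
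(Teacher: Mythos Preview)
Your proof is correct and follows essentially the same route as the paper's: both combine Proposition~\ref{proposition3.2}, Proposition~\ref{proposition3.1}, the homological well-definedness from Theorem~\ref{theorem1.1}, and the observation $[r(K)]=-[K]$ (the paper phrases the latter via anti-symmetry of the intersection form, writing $[K]\cdot[K]=0$). The only cosmetic difference is the order of the chain: the paper computes $W_{r(K)}^{\gamma_{r(K)}}(t)=W_K^{\gamma_{r(K)}}(t)=W_K^{-\gamma_K}(t)=W_K^{\gamma_K}(t^{-1})$ directly, whereas you first pass through $W_{r(K)}^{-\gamma_{r(K)}}$ and apply Proposition~\ref{proposition3.1} to $r(K)$ at the end; the content is identical.
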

\begin{proof}
Recall that the intersection form $H_1(\Sigma,\mathbb{Z})\times H_1(\Sigma,\mathbb{Z})\to\mathbb{Z}$ is anti-symmetric, therefore $[\gamma_K]\cdot[K]=[K]\cdot[K]=0$ and $[\gamma_{r(K)}]\cdot[r(K)]=[r(K)]\cdot[r(K)]=0$. One calculates
\begin{center}
$W_{r(K)}^{\gamma_{r(K)}}(t)\xlongequal{\text{Proposition \ref{proposition3.2}}}W_K^{\gamma_{r(K)}}(t)=W_K^{-\gamma_K}(t)\xlongequal{\text{Proposition \ref{proposition3.1}}}W_K^{\gamma_K}(t^{-1})$.
\end{center}
\end{proof}

\begin{proposition}\label{proposition3.4}
Let $K$ be an oriented knot in $\Sigma\times I$ and $m(K)$ be the knot obtained from $K$ by switching all the crossing points, then for any $[\gamma]\in\mathcal{H}_1^K(\Sigma, \mathbb{Z})$, we have $W_{m(K)}^{\gamma}(t)=-W_K^{\gamma}(t^{-1})$.
\end{proposition}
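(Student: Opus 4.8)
The plan is to compare the chord index and writhe data of $K$ and $m(K)$ crossing by crossing, exactly in the spirit of the proofs of Propositions~\ref{proposition3.1} and~\ref{proposition3.2}. First I would note that $K$ and $m(K)$ have the same underlying immersed curve (shadow) on $\Sigma$; in particular $[m(K)]=[K]$, so $\mathcal{H}_1^{m(K)}(\Sigma,\mathbb{Z})=\mathcal{H}_1^{K}(\Sigma,\mathbb{Z})$ and the hypothesis $[\gamma]\in\mathcal{H}_1^K(\Sigma,\mathbb{Z})$ indeed makes $f_\gamma$ well-defined on $m(K)$. Since a coloring of the shadow with respect to $\gamma$ depends only on $D\cap\gamma$ and the orientation of $D$ — neither of which is affected by switching crossings — the same coloring serves simultaneously for $K$ and $m(K)$, so the integers sitting on the four endpoints near any crossing $c$ are identical in the two diagrams.

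Next I would examine the effect of the switch at a single crossing $c$. Switching exchanges the over-arc with the under-arc while leaving the orientation (hence the colors) untouched, so $f_\gamma^{m(K)}(c)=-f_\gamma^{K}(c)$; simultaneously the writhe reverses, $w_{m(K)}(c)=-w_K(c)$. (Equivalently, one can read both facts off Remark~\ref{remark2.4}, writing $f_\gamma(c)=w(c)([\gamma]\cdot[D_c^r])$ and using that the oriented smoothings $D_c^l,D_c^r$ depend only on the shadow.) In particular $f_\gamma^{m(K)}(c)\neq0$ precisely when $f_\gamma^{K}(c)\neq0$, so the two sums defining $W_{m(K)}^\gamma(t)$ and $W_K^\gamma(t)$ run over the very same set of crossings.

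Finally I would assemble the sum:
\[
W_{m(K)}^{\gamma}(t)=\sum_{f_\gamma^{m(K)}(c)\neq0}w_{m(K)}(c)\,t^{f_\gamma^{m(K)}(c)}=\sum_{f_\gamma^{K}(c)\neq0}\bigl(-w_K(c)\bigr)\,t^{-f_\gamma^{K}(c)}=-W_K^{\gamma}(t^{-1}),
\]
which is the asserted identity. I do not expect a genuine obstacle here; the only point demanding care is the bookkeeping of which strand is ``over'' after the switch, together with the observation that the $\gamma$-coloring is a property of the shadow alone, so that the two polynomials are genuinely indexed over the same crossings with exponents that are exact negatives of one another. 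This is essentially the earlier mirror/reversal mechanism again, now combining the sign change of the writhe with the sign change of $f_\gamma$.
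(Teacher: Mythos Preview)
Your argument is correct and follows exactly the same approach as the paper: the $\gamma$-coloring depends only on the shadow and hence coincides for $K$ and $m(K)$, while switching a crossing interchanges the over- and under-arcs (negating $f_\gamma$) and reverses the writhe, which together yield $W_{m(K)}^{\gamma}(t)=-W_K^{\gamma}(t^{-1})$. Your write-up is simply more detailed than the paper's terse version.
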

\begin{proof}
According to the coloring rule defined in Section \ref{section2}, for a fixed closed curve $\gamma$ the integers on $K$ and those on $m(K)$ coincide with each other. However, switching a crossing point not only interchanges the over-arc and under-arc, but also changes the writhe of this crossing. It follows that $W_{m(K)}^{\gamma}(t)=-W_K^{\gamma}(t^{-1})$.
\end{proof}

\begin{example}
Let us consider the knot diagram $K$ on $S^2$ with three 1-handles $H_i=S^1\times I$ $(1\leq i\leq3)$ attached along $\partial H_i=\partial H_i^+\cup\partial H_i^-$, see Figure \ref{figure11}. Choose a closed $\gamma$ which satisfies $[\gamma]=[K]$, direct calculation shows that $f_{\gamma}(c_1)=f_{\gamma}(c_2)=1$ and $f_{\gamma}(c_3)=-2$. As a corollary, we have $W_K^{\gamma}(t)=2t+t^{-2}, W_{r(K)} ^{-\gamma}(t)=t^2+2t^{-1}$ and $W_{m(K)}^{\gamma}(t)=-t^2-2t^{-1}$. According to Corollary \ref{corollary3.3} and Proposition \ref{proposition3.4}, we conclude that $K\neq r(K)$ and $K\neq m(K)$.
\begin{figure}[h]
\centering
\includegraphics{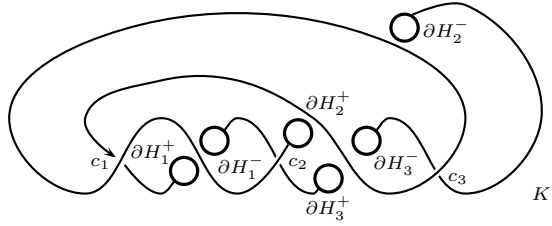}\\
\caption{A non-invertible and chiral knot $K$ in a thickened surface}\label{figure11}
\end{figure}
\end{example}

Let $K_1$ and $K_2$ be two oriented knots in $\Sigma\times I$ and $b: [0, 1]\times[0, 1]\to\Sigma\times I$ an embedding such that $b([0, 1]\times[0, 1])\cap K_1=b([0, 1]\times\{0\})$ and $b([0, 1]\times[0, 1])\cap K_2=b([0, 1]\times\{1\})$. By removing $b([0, 1]\times\{0,1\})$ and adding $b(\{0, 1\}\times[0, 1])$ to $K_1\cup K_2$ one obtains a new oriented knot, which is called the \emph{band sum} of $K_1$ and $K_2$ and denoted by $K_1\#_bK_2$.

\begin{proposition}\label{proposition3.5}
Let $K_1\#_bK_2$ be a band sum of $K_1$ and $K_2$ with a band $b$ in $\Sigma\times I$. Assume there exists a closed curve $\gamma$ on $\Sigma$ which satisfies $[\gamma]\cdot[K_1]=[\gamma]\cdot[K_2]=0$, and there exists no crossing point between $K_1$ and $K_2$, then $W_{K_1\#_bK_2}^{\gamma}(t)=W_{K_1}^{\gamma}(t)+W_{K_2}^{\gamma}(t)$.
\end{proposition}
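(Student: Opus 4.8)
The plan is to \emph{localize} the computation: I will show that, away from the band, the coloring of $K_1\#_bK_2$ induced by $\gamma$ agrees with the colorings of $K_1$ and of $K_2$ up to a global additive constant, whence every chord index is unchanged and the writhe polynomials simply add. First I would fix conventions. Since the band disk realizes $[K_1\#_bK_2]=[K_1]+[K_2]$ in $H_1(\Sigma,\mathbb Z)$, the hypothesis $[\gamma]\cdot[K_1]=[\gamma]\cdot[K_2]=0$ gives $[\gamma]\cdot[K_1\#_bK_2]=0$ as well, so $f_\gamma$ — and hence the writhe polynomial — is defined for all three of $K_1$, $K_2$, $K_1\#_bK_2$ (the hypothesis is genuinely needed on each of $K_1$, $K_2$ separately, since otherwise the right-hand side is not even defined). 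Using that the hypothesis lets us draw $K_1\#_bK_2$ as disjoint diagrams of $K_1$ and $K_2$ from which small crossingless arcs $\alpha_1\subset K_1$, $\alpha_2\subset K_2$ have been removed and whose endpoints are joined by two band arcs meeting nothing else, I set $A_i=K_i\setminus\alpha_i$; then $D:=K_1\#_bK_2$ is, as an oriented loop, $A_1$ followed by a band arc, then $A_2$, then another band arc, its crossing set is $C(K_1)\sqcup C(K_2)$, and every writhe is unchanged.

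The heart of the argument is the coloring comparison. Perturb $\gamma$ transverse to $D$ and fix, via Lemma~\ref{lemma2.1}, a coloring of $D$ with respect to $\gamma$. Its restriction to $A_1$ obeys the coloring rule of Section~\ref{section2}: it changes by $\pm1$ precisely across the points of $A_1\cap\gamma$, with signs prescribed by those intersection points. But $A_1$ is at the same time a sub-arc of the diagram $K_1$, and the restriction to $A_1$ of any coloring of $K_1$ with respect to $\gamma$ obeys the same rule with the same jumps. Two $\mathbb Z$-valued functions on the connected arc $A_1$ whose increments across each point of $A_1\cap\gamma$ coincide differ by a constant; hence for every $c\in C(K_1)\subset A_1$ the colors of the over-arc and the under-arc at $c$ are, in $D$, those of $K_1$ shifted by one and the same integer, so $f_\gamma(c)$ is independent of whether it is computed in $D$ or in $K_1$. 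The identical reasoning on $A_2$ shows $f_\gamma$ agrees on $C(K_2)$.

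Putting this together, the defining sum of $W_{K_1\#_bK_2}^{\gamma}(t)$ splits along $C(D)=C(K_1)\sqcup C(K_2)$, and since writhes and chord indices of these crossings are unchanged it equals $W_{K_1}^{\gamma}(t)+W_{K_2}^{\gamma}(t)$. (Alternatively one can bypass colorings by Remark~\ref{remark2.4}: for $c\in C(K_1)$ the oriented smoothing of $c$ in $D$ reconnects locally exactly as in $K_1$, so $[D_c^r]$ is either $[(K_1)_c^r]$ or $[(K_1)_c^r]+[K_2]$ according to which component of the smoothing of $K_1$ carries the band; as $[\gamma]\cdot[K_2]=0$, in both cases $[\gamma]\cdot[D_c^r]=[\gamma]\cdot[(K_1)_c^r]$, and therefore $f_\gamma(c)=w(c)\bigl([\gamma]\cdot[(K_1)_c^r]\bigr)$ is unchanged.) The only real obstacle I anticipate is the bookkeeping of the first paragraph — making precise, from the hypothesis, that the band sum admits a diagram with $C(D)=C(K_1)\sqcup C(K_2)$ and unchanged writhes, and that the band arcs, even when they cross $\gamma$, alter the colorings of $A_1$ and $A_2$ only by global constants; once that is pinned down, the remainder is routine.
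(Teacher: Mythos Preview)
Your argument for the crossings inherited from $K_1$ and $K_2$ is fine, and your parenthetical alternative via Remark~\ref{remark2.4} is exactly what the paper does for those crossings. The gap is elsewhere: you assume you can arrange a diagram of $K_1\#_bK_2$ with $C(D)=C(K_1)\sqcup C(K_2)$, i.e.\ with band arcs ``meeting nothing else''. This is not generally possible. The band $b$ is part of the data; its isotopy class determines the knot $K_1\#_bK_2$, and it may carry full twists (forcing crossings of $b(\{0\}\times[0,1])$ with $b(\{1\}\times[0,1])$) or pass over or under portions of $K_1$ or $K_2$ (forcing crossings of the band arcs with the $K_i$). None of these can be removed by isotopy without changing the knot, and the hypothesis ``no crossing point between $K_1$ and $K_2$'' says nothing about them.

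The paper handles exactly this point. It observes that the new crossings always come in pairs: either a single arc crosses both $b(\{0\}\times[0,1])$ and $b(\{1\}\times[0,1])$ at nearby points, or (after isotopy) two crossings appear as a full twist between the two band edges. In the first case the two parallel band arcs carry the same color with respect to $\gamma$, so the two crossings have equal chord index but opposite writhe and cancel in $W^{\gamma}$. In the second case both arcs of the twist have the same color, so each crossing has chord index zero and contributes nothing. Once you add this cancellation argument, your proof is complete; without it, the claim $C(D)=C(K_1)\sqcup C(K_2)$ is simply false and the remainder does not go through.
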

\begin{proof}
First notice that $[\gamma]\cdot[K_1\#_bK_2]=[\gamma]\cdot[K_1]+[\gamma]\cdot[K_2]=0+0=0$, hence the writhe polynomial $W_{K_1\#_bK_2}^{\gamma}(t)$ makes sense.

Consider a crossing point $c\in C(K_1)$, there exists a corresponding crossing point in $C(K_1\#_bK_2)$ and we still use $c$ to denote it. Let $f_{\gamma}^{K_1}(c)$ and $f_{\gamma}^{K_1\#_bK_2}(c)$ denote the chord index of $c$ in $C(K)$ and $C(K_1\#_bK_2)$ respectively. Then we have $f_{\gamma}^{K_1\#_bK_2}(c)=w(c)([\gamma]\cdot[(K_1\#_bK_2)_c^r])=w(c)([\gamma]\cdot([(K_1)_c^r]+\epsilon[K_2]))=w(c)([\gamma]\cdot[(K_1)_c^r])=f_{\gamma}^{K_1}(c)$, where $\epsilon\in\{0, 1\}$. More precisely, $\epsilon=1$ if $b([0, 1]\times\{0\})\subset (K_1)_c^r$, otherwise $\epsilon=0$. Similarly, one can prove that $f_{\gamma}^{K_1\#_bK_2}(c')=f_{\gamma}^{K_2}(c')$ if $c'\in C(K_2)$.

In order to complete the proof, it suffices to show that the contribution coming from the ``new" crossing points formed by $b(\{0, 1\}\times[0, 1])$ with itself or $b(\{0, 1\}\times[0, 1])$ with $K_i$ ($i=1, 2$) vanishes. In order to see this, we notice that these new crossing points always come in pairs locally, either one is formed by an arc with $b(\{0\}\times[0, 1])$, the other one is formed by the same arc with $b(\{1\}\times[0, 1])$, or (after an isotopy if necessary) two crossing points between $b(\{0\}\times[0, 1])$ and $b(\{1\}\times[0, 1])$, just like the two crossings in a full twist. For the first case, similar to the proof of Proposition \ref{proposition3.2}, it is easy to observe that locally these two parallel arcs coming from $b(\{0\}\times[0, 1])$ and $b(\{1\}\times[0, 1])$ have the same color with respect to $\gamma$. Hence these two crossing points have the same chord index but different writhes, and their contributions to the writhe polynomial cancel out. For the second case, notice that the two arcs in the full twist have the same color, therefore these two crossing points both have chord index zero. It follows that all these new crossing points together has no contribution to $W_{K_1\#_bK_2}^{\gamma}(t)$.
\end{proof}

We remark that we can not drop the assumption that the two knot diagrams $K_1, K_2$ satisfy $K_1\cap K_2=\emptyset$, just like when we consider the band sum of two knots in $S^3$ we usually require that these two knots form a 2-component split link. For example, the knot $K$ described in Example \ref{example2.6} can be regarded as the band sum of the two components of a Hopf link. Although each component is a trivial knot and hence has writhe polynomial zero, the writhe polynomial of $K$ is nontrivial.

\section{Generalized chord index valued in $\mathbb{Z}H_1(\Sigma, \mathbb{Z})$}\label{section4}
This section includes some remarks on the $\mathbb{Z}$-valued chord index defined in Section \ref{section1}. First, if we consider homology groups with coefficients in $\mathbb{Z}_2$ rather than $\mathbb{Z}$, then all the theory we developed above carries out directly with no change in the proofs. More precisely, for a knot $K$ in $\Sigma\times I$ and a closed curve $\gamma$ that satisfies $[K]\cdot[\gamma]=0$ (mod 2), then the arcs of $K$ can be colored with $\{0, 1\}$ such that these two colors appear alternatively. We say a crossing point $c\in C(K)$ is \emph{odd} if the two arcs of this crossing has different colors, otherwise we say it is \emph{even}. This is exactly the parity introduced by Boden and Rushworth in \cite{BR2021} and it satisfies all the parity axioms mentioned in Section \ref{section1}.

Strictly speaking, our definition of the parity axioms is a bit different from that introduced by Boden and Rushworth. For the third Reidemeister move, we require the parity $p$ satisfies $\sum\limits_{i=1}^3p(c_i)=0$ (mod 2). However, in Boden and Rushworth's definition, the case $p(c_1)=p(c_2)=p(c_3)=1$ (mod 2) is also allowed. In this case, one can also apply the \emph{parity projection} operation, which is a well defined operation sending a virtual knot to another one. The reason why we delete it from our definition is this case never happens in the homological parity construction.

In order to see this, we suffices ourselves with the following two example verifications, other cases can be verified in a similar manner. For the diagram on the left side of Figure \ref{figure5}, evident calculation shows that $[D_{c_1}^r]+[D_{c_2}^r]+[D_{c_3}^r]=[D]$. For the diagram on the right side, one observes that $[D_{c_1}^r]+[D]=[D_{c_2}^r]+[D_{c_3}^r]$, see also \cite[Lemma 1.3]{Fie2001}. In both cases we have $f_{\gamma}(c_3)=f_{\gamma}(c_1)+f_{\gamma}(c_2)$. In particular, any chord parity derived from $\mathcal{H}_1^D(\Sigma, \mathbb{Z}_2)$ can not provide a parity such that $c_1, c_2$ and $c_3$ are all odd.
\begin{figure}[h]
\centering
\includegraphics{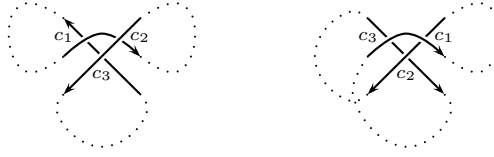}\\
\caption{Two cases of the third Reidemeister move}\label{figure5}
\end{figure}

Now we turn to discuss an extension of the $\mathbb{Z}$-valued chord index defined in Section \ref{section2}. The following definition of chord index is an analogue of the generalized chord index proposed by the first author in \cite{Che2021}.

As before, let $D$ be an oriented knot diagram on a closed oriented surface $\Sigma$, and $C(D)$ be the set of crossing points of $D$. A (generalized) chord index is a function $g: C(D)\to G$ where $G$ is a set, such that all the following axioms are satisfied:
\begin{enumerate}
\item Any crossing point involved in the first Reidemeister move has a fixed chord index;
\item The two crossing points involved in the second Reidemeister move have the same chord index;
\item The chord indices of the three crossing points involved in the third Reidemeister move are preserved respectively under this move;
\item The chord index of any crossing point that is not involved in a Reidemeister move is preserved under this move.
\end{enumerate}
Under this generalized definition, the chord indices of crossing points are no longer restricted to integers. The following is an example, which can be considered as a generalization of the writhe polynomial $W_K^{\gamma}(t)$ defined in Section \ref{section2}. The main thoughts of it is similar to the small state sum invariant defined by Thomas Fiedler in \cite{Fie1993}, see also \cite{CGX2020}.

Consider a crossing point $c\in C(D)$, we define $g(c)=[D_c^l]+[D_c^r]\in G=\mathbb{Z}[H_1(\Sigma, \mathbb{Z})]$, the group ring of $H_1(\Sigma, \mathbb{Z})$ over $\mathbb{Z}$. It is a routine exercise to verify that $g$ satisfies all the axioms above. In particular, for a crossing point involved in the first Reidemeister move, the chord index of it equals $[D]+[0]=[D]$. Since the homology class of $K$ is independent of the choice of the knot diagram, one obtains an invariant from the chord index $g$, which can be defined as
\begin{center}
$\mathcal{W}_K=\sum\limits_{c\in C(D)}w(c)g(c)-w(D)[K]\in\mathbb{Z}[H_1(\Sigma, \mathbb{Z})]$,
\end{center}
here $w(D)$ denotes the writhe of the knot diagram $D$. The chord index axioms above guarantee that $\mathcal{W}_K$ is independent of the choice of the knot diagram.

\begin{remark}\label{remark4.1}
In \cite{Fie1993}, Fiedler introduced the small state sum invariant for knots in thickened surfaces. In our language, Fiedler considered the function
$$g_F(c)=
\begin{cases}
[D_c^l],& \text{if } w(c)=+1;\\
[D_c^r],& \text{if } w(c)=-1.
\end{cases}$$
This function satisfies all the axioms above but not the first one. Actually, for a crossing point $c$ in the first Reidemeister move, $g_F(c)=[K]$ or $[0]$. In order to overcome this obstacle, Fiedler identified the elements $[K]$ and $[0]$ in $H_1(\Sigma, \mathbb{Z})$. In other words, by choosing $G=\mathbb{Z}[H_1(\Sigma, \mathbb{Z})/<[K]>]$, the small state sum invariant can be defined as $\sum\limits_{c\in C(D)}w(c)g_F(c)-w(D)[0]$. We remark that Fiedler also considered the case that $\Sigma$ is non-orientable.
\end{remark}

\begin{remark}\label{remark4.2}
Recall that two knot diagrams are \emph{regular isotopy} if they are related by some Reidemeister moves of type II and type III. If we want to define a regular invariant derived from chord index, the first axiom can be dropped. As an example, for each crossing point $c\in C(D)$ we can assign an index
$$g_{reg}(c)=
\begin{cases}
x[D_c^l]+y[D_c^r],& \text{if } w(c)=+1;\\
y[D_c^l]+x[D_c^r],& \text{if } w(c)=-1
\end{cases}$$
to it. It is not difficult to verify that $g_{reg}(c)$ satisfies all the axioms above but the first one. As a consequence, the sum $\mathcal{W}_K^{reg}=\sum\limits_{c\in C(D)}w(c)g_{reg}(c)\in\mathbb{Z}[H_1(\Sigma, \mathbb{Z}[x, y])]$ is a regular invariant of knots in $\Sigma\times I$.
\end{remark}

\begin{remark}\label{remark4.3}
All the discussion above, including the writhe polynomial $W_K^{\gamma}(t)$ and its generalization $\mathcal{W}_K$, can be extended directly to the case that $\Sigma$ has nonempty boundaries.
\end{remark}

\section{Applications for virtual knots}\label{section5}
We begin this section with a quick review of virtual knots. As an extension of the classical knot theory, virtual knot theory was introduced by Kauffman in \cite{Kau1999}. We know that a classical knot is an embedding of $S^1$ in $R^3$ up to isotopy. It is equivalent to consider embeddings of $S^1$ in $S^2\times I$, which yields the same theory. By replacing the 2-sphere with surfaces of higher genera one obtains the virtual knot theory. More precisely, a virtual knot is an embedding of $S^1$ in $\Sigma\times I$. We say two virtual knots are \emph{equivalent} if they are related by an isotopy in $\Sigma\times I$, homeomorphisms of surfaces and addition or subtraction an empty handle. From the viewpoint of knot diagram, a virtual knot diagram is a classical knot diagram with some classical crossing points replaced by virtual crossing points, and each virtual crossing point is usually denoted by a small circle around the vertex. We say two virtual knot diagrams are \emph{equivalent} if they are related by some generalized Reidemeister moves, see Figure \ref{figure6}. Each virtual knot diagram corresponds to a knot in a thickened surface. In order to see this, one just needs to image the virtual knot diagram as a diagram on $S^2$, then adds a handle for each virtual crossing point to eliminate this crossing on the surface. It is known that two virtual knots are equivalent if and only if they represent the same virtual knot \cite{Kau1999}.

\begin{figure}
\centering
\includegraphics{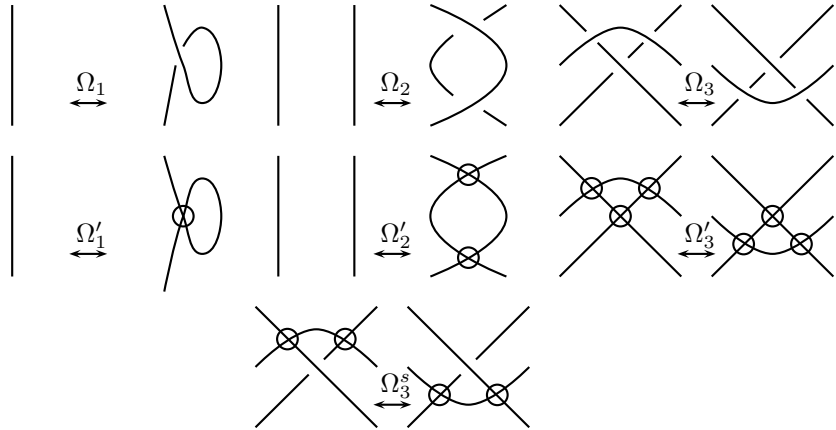}\\
\caption{Generalized Reidemeister moves}\label{figure6}
\end{figure}

Now we recall the definition of writhe polynomial, which was introduced in \cite{CG2013}, see also \cite{Dye2013,ISL2013,Kau2013,ST2014}. Given a virtual knot diagram, there exists a corresponding Gauss diagram which can be regarded as the pre-image of the knot diagram. A Gauss diagram is a counterclockwise oriented planar circle with some oriented signed chords inside. Each chord corresponds to a classical crossing point of the diagram, and it is directed from the pre-image of the overcrossing to the pre-image of the undercrossing. Finally, the sign of a chord is nothing but the writhe of the corresponding crossing point. Unlike classical knot diagrams, each Gauss diagram can be realized by infinitely many different virtual knot diagrams. Although the realization is not unique, all these virtual knot diagrams represent the same virtual knot. One benefit of considering a Gauss diagram rather than the corresponding virtual knot diagram is, there is no virtual crossing point there and therefore one only needs to consider the classical Reidemeister moves.

Assume we are given a virtual knot diagram $K$, let us use $G(K)$ to denote the associated Gauss diagram. For a classical crossing point $c$, we use the same notation to denote the corresponding chord in $G(K)$. Consider other chords in $G(K)$ which has nonempty intersection with $c$, we define
\begin{itemize}
  \item $r_+(c)$ to be the number of positive chords crossing $c$ from left to right;
  \item $r_-(c)$ to be the number of negative chords crossing $c$ from left to right;
  \item $l_+(c)$ to be the number of positive chords crossing $c$ from right to left;
  \item $l_-(c)$ to be the number of negative chords crossing $c$ from right to left.
\end{itemize}
See Figure \ref{figure7}.
\begin{figure}[h]
\centering
\includegraphics[width=3cm]{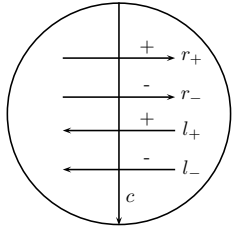}\\
\caption{The definition of the chord index}\label{figure7}
\end{figure}

Now we define the \emph{chord index} of $c$ as
\begin{center}
Ind$(c)=r_+(c)-r_-(c)-l_+(c)+l_-(c)$,
\end{center}
and the \emph{writhe polynomial} of a virtual knot $K$ can be defined as
\begin{center}
$W_K(t)=\sum\limits_{\text{Ind}(c)\neq0}w(c)t^{\text{Ind}(c)}$,
\end{center}
which is a virtual knot invariant.

\begin{remark}\label{remark5.1}
Sometimes the writhe polynomial is defined to be $\sum\limits_{c}w(c)t^{\text{Ind}(c)}-w(K)$, which equals $\sum\limits_{\text{Ind}(c)\neq0}w(c)t^{\text{Ind}(c)}+\sum\limits_{\text{Ind}(c)=0}w(c)t^{\text{Ind}(c)}-\sum\limits_{\text{Ind}(c)\neq0}w(c)-\sum\limits_{\text{Ind}(c)=0}w(c)=W_K(t)-W_K(1)$.
\end{remark}

Let $K$ be a virtual knot, which is realized as a circle in $\Sigma\times I$. The following lemma tells us that Ind$(c)$ can be recovered from $f_{\gamma}(c)$ for some specially chosen closed curve $\gamma$.

\begin{lemma}\label{lemma5.2}
For an oriented knot $K$ in $\Sigma\times I$, choose a closed curve $\gamma$ on $\Sigma$ such that $[\gamma]=[K]$, then for any crossing point $c$ we have $f_{\gamma}(c)=\text{Ind}(c)$. In particular, for this choice of $\gamma$ the two writhe polynomials $W_K^{\gamma}(t)$ and $W_K(t)$ coincide with each other.
\end{lemma}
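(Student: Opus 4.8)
The plan is to show that the combinatorial quantity $\mathrm{Ind}(c)$ read off from the Gauss diagram $G(K)$ equals the homological intersection number $f_{\gamma}(c)$ for the special curve $\gamma$ with $[\gamma]=[K]$, and the key is to identify the right geometric representative of $[K]$. First I would recall Lemma \ref{lemma2.3} (and Remark \ref{remark2.4}), which reduces the problem to computing $w(c)\,([\gamma]\cdot[D_c^r])$; since we are free to choose any curve $\gamma$ representing $[K]$, the natural choice is to take $\gamma$ to be a pushed-off parallel copy of the knot diagram $D$ itself, i.e. a curve running alongside $D$ in the surface $\Sigma$. With this choice, the intersection points of $\gamma$ with $D$ are in bijection with the crossing points of $D$: near each crossing $c'$ of $D$, the parallel copy $\gamma$ of the under-strand (say) must cross the over-strand of $D$, contributing one transverse intersection point whose sign is determined by how the two strands at $c'$ are oriented, i.e. by the data that in the Gauss diagram is recorded as ``chord $c'$ crosses chord $c$ from left to right'' versus ``from right to left.''

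The main computation is then to match the sign bookkeeping. I would set up the smoothing $D_c^r$ explicitly: oriented smoothing at $c$ splits $D$ into two arcs, and $[D_c^r]$ is one of the two resulting loops. A chord $c'$ of $G(K)$ contributes to $[\gamma]\cdot[D_c^r]$ precisely when exactly one of its two endpoints lies on the arc that becomes $D_c^r$ — equivalently, when the chord $c'$ is \emph{linked} with $c$ in the Gauss diagram — and the sign of that contribution is the product of the writhe $w(c')$ (this is the sign of the chord $c'$, which determines the orientation of the over/under strands and hence the local intersection sign of $\gamma$ with $D_c^r$) and a $\pm 1$ coming from the direction ($c'$ crossing $c$ from left to right versus right to left). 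Carrying this through, the positive chords crossing from the left contribute $+1$ each, positive chords from the right contribute $-1$, negative chords from the left contribute $-1$, and negative chords from the right contribute $+1$ — which is exactly $r_+(c)-r_-(c)-l_+(c)+l_-(c)=\mathrm{Ind}(c)$. Then, after multiplying by $w(c)$ as in Remark \ref{remark2.4} and comparing with the standard sign conventions for $\mathrm{Ind}(c)$, the equality $f_{\gamma}(c)=\mathrm{Ind}(c)$ follows. (One must be a little careful that the standard definition of $\mathrm{Ind}(c)$ already has the correct normalization so that no extra factor of $w(c)$ appears; this is a convention check.)

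Once $f_{\gamma}(c)=\mathrm{Ind}(c)$ for every crossing $c$, the statement about the writhe polynomials is immediate: $W_K^{\gamma}(t)=\sum_{f_{\gamma}(c)\neq 0}w(c)t^{f_{\gamma}(c)}=\sum_{\mathrm{Ind}(c)\neq 0}w(c)t^{\mathrm{Ind}(c)}=W_K(t)$, since the two sums have identical terms. The main obstacle I anticipate is purely a matter of sign conventions: getting the correspondence between ``the pair of tangent vectors of $\gamma$ and $D$ orients $T_x\Sigma$'' (the definition of a positive intersection point in Section \ref{section2}) and the left/right crossing convention for chords in the Gauss diagram (Figure \ref{figure7}) exactly right, so that the four cases line up with $+,-,-,+$ rather than some other sign pattern. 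I would handle this by drawing the single local picture of a crossing of $D$ together with the pushed-off copy $\gamma$, fixing orientations, and reading off all four cases at once, rather than arguing in the abstract. A secondary point worth remarking is that this pushed-off copy $\gamma$ indeed has $[\gamma]=[K]$ and satisfies $[\gamma]\cdot[K]=[K]\cdot[K]=0$ by anti-symmetry of the intersection form, so $f_{\gamma}$ is legitimately defined — this is the same observation already used in Corollary \ref{corollary3.3}.
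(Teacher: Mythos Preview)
Your approach is correct but takes a different route from the paper. The paper's proof is a two-line citation-plus-algebra argument: it invokes \cite[Lemma 3.2]{BCG2019}, which states $\mathrm{Ind}(c)=w(c)\,[D_c^l]\cdot[K]$, and then manipulates using anti-symmetry of the intersection form and the relation $[D_c^l]+[D_c^r]=[K]$ to obtain
\[
\mathrm{Ind}(c)=w(c)\,[D_c^l]\cdot[K]=w(c)\,[K]\cdot(-[D_c^l])=w(c)\,[K]\cdot([K]-[D_c^l])=w(c)\,[\gamma]\cdot[D_c^r]=f_{\gamma}(c).
\]
You instead re-prove the content of that cited lemma from scratch, by taking $\gamma$ to be a parallel pushoff of $D$ and counting intersections with $D_c^r$ chord-by-chord. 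What you gain is self-containment: your argument does not depend on an external reference and makes the combinatorics of the Gauss diagram visible. What the paper's route buys is brevity and the avoidance of exactly the sign bookkeeping you flag as the main obstacle. One small simplification of your setup: rather than working with the pushoff (which produces two intersection points near every crossing $c'$, one from each strand's pushoff, and then requires you to discard the ones not landing on $D_c^r$), you can compute $[K]\cdot[D_c^r]$ directly by writing $[K]=[D_c^l]+[D_c^r]$, so that $[K]\cdot[D_c^r]=[D_c^l]\cdot[D_c^r]$; the transverse intersections of $D_c^l$ with $D_c^r$ are then \emph{exactly} the crossings $c'\neq c$ whose chords link $c$, with no spurious points near $c$ or near self-crossings of $D_c^r$ to worry about. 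This makes the four-case sign check cleaner and resolves your parenthetical worry about the extra $w(c)$.
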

\begin{proof}
In \cite[Lemma 3.2]{BCG2019}, it was proved that Ind$(c)=w(c)[D_c^l]\cdot[K]$. Then one calculates
\begin{center}
Ind$(c)=w(c)[D_c^l]\cdot[K]=w(c)[K]\cdot(-[D_c^l])=w(c)[K]\cdot([K]-[D_c^l])=w(c)[\gamma]\cdot[D_c^r]=f_{\gamma}(c)$.
\end{center}
The second statement follows directly.
\end{proof}

Lemma \ref{lemma5.2} tells us that $f_{\gamma}$ can be used to define a polynomial invariant for $K$ if $[\gamma]=[K]$. Nevertheless, not every closed $\gamma$ on $\Sigma$ can be used to define a virtual knot invariant. The main reason is, for virtual knots the surface $\Sigma$ is not fixed. In other words, we are allowed to add or remove empty handles. If one chooses a closed curve $\gamma$ on this kind of nugatory handle, the result obtained may be not a virtual knot invariant.

As an extreme example, consider a fixed classical knot diagram $D$ on $S^2$ with crossing points $C(D)=\{c_1, \cdots, c_n\}$. For arbitrarily chosen integers $a_1, \cdots, a_n$, if we are allowed to add new handles, then there exists an element $[\gamma]\in\mathcal{H}_1^D(\Sigma, \mathbb{Z})$ such that $f_{\gamma}(c_i)=a_i$ $(1\leq i\leq n)$. In order to see this, for each crossing point $c_i$ we add $a_i$ concentric circles around $c_i$, see Figure \ref{figure8}. Note that in Figure \ref{figure8}, $2a_i$ handles (actually, two are sufficient) are attached to the 2-sphere so that all the virtual crossing points actually are not real. It is evident that $f_{\gamma}(c_i)=a_i$, which is a positive integer. If $a_i<0$ or $w(c_i)=-1$, one needs to reverse all the orientations of the concentric circles.
\begin{figure}[h]
\centering
\includegraphics{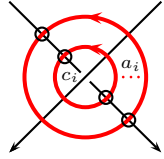}\\
\caption{A crossing point with index $a_i$}\label{figure8}
\end{figure}

If one wants to define a virtual knot invariant based on the chord index $f_{\gamma}$, one needs to fix the surface $\Sigma$. Each virtual knot $K$ can be realized as a circle in thickened surfaces, the minimal genus of these surfaces is called the \emph{supporting genus} of $K$. It was proved by Kuperberg \cite{Kup2003} that the embedding of a virtual knot in the thickened surface of minimal genus is unique. As a consequence, it is a natural choice to consider the minimal surface for a given virtual knot. The result below follows immediately.

\begin{proposition}\label{proposition5.3}
Let $K$ be a virtual knot which possesses a representative in thickened surface $\Sigma\times I$, if the genus of $\Sigma$ is equal to the supporting genus of $K$, then for any closed curve $\gamma$ satisfying $[\gamma]\in\mathcal{H}_1^K(\Sigma, \mathbb{Z})$, the writhe polynomial $W_K^{\gamma}(t)$ is a virtual knot invariant.
\end{proposition}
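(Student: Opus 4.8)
The plan is to combine Kuperberg's rigidity theorem for minimal genus representatives with the invariance properties of $f_{\gamma}$ already established in Section \ref{section2}. Passing a virtual knot diagram through the handle construction, two diagrams of the same virtual knot $K$ produce knots in thickened surfaces that differ by a finite sequence of moves of three types: (a) Reidemeister moves and isotopies taking place on a \emph{fixed} surface; (b) homeomorphisms of the surface; and (c) stabilizations and destabilizations, i.e.\ addition and removal of empty handles. As the discussion around Figure \ref{figure8} shows, moves of type (c) are exactly what can destroy the invariance of $W_K^{\gamma}(t)$: a copy of $\gamma$ pushed over a newly created empty handle can be arranged to realize almost any prescribed chord indices. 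The minimal genus hypothesis is present precisely to forbid this.

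First I would invoke Kuperberg's theorem \cite{Kup2003}: when the genus of $\Sigma$ equals the supporting genus of $K$, the pair $(\Sigma\times I, K)$ is unique up to isotopy and (orientation-preserving) homeomorphism of $\Sigma$. Consequently any two diagrams $D_1\subset\Sigma_1$ and $D_2\subset\Sigma_2$ of $K$ on minimal genus surfaces are related by moves of type (a) together with a single homeomorphism $\phi\colon\Sigma_1\to\Sigma_2$ carrying the isotopy class of $K_1$ to that of $K_2$; no stabilization or destabilization is needed.

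Next I would verify invariance under types (a) and (b) separately. Invariance under (a) is exactly Lemma \ref{lemma2.2}, which says that $f_{\gamma}$, and hence $W_K^{\gamma}(t)=\sum_{f_{\gamma}(c)\neq 0} w(c)\,t^{f_{\gamma}(c)}$, is unchanged by Reidemeister moves performed on a fixed surface, with $\gamma$ held fixed or transported as in Figure \ref{figure2}. For (b), an orientation-preserving homeomorphism $\phi$ carries the triple $(\Sigma_1, D_1, \gamma)$ to $(\Sigma_2, D_2, \phi(\gamma))$ while preserving the signs of all intersection points of $\gamma$ with $D_1$ and the writhes of all crossings, so the coloring rule of Section \ref{section2} transports verbatim and $W_{D_1}^{\gamma}(t)=W_{D_2}^{\phi(\gamma)}(t)$. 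Moreover $\phi_{*}\colon H_1(\Sigma_1,\mathbb{Z})\to H_1(\Sigma_2,\mathbb{Z})$ is an isomorphism sending $[K]$ to $[K]$ and respecting the intersection form, hence restricting to an isomorphism $\mathcal{H}_1^{K}(\Sigma_1,\mathbb{Z})\to\mathcal{H}_1^{K}(\Sigma_2,\mathbb{Z})$. Combining this with the fact, established in the proof of Theorem \ref{theorem1.1}, that $W_K^{\gamma}(t)$ depends on $\gamma$ only through $[\gamma]$, one concludes that $W_K^{\gamma}(t)$ is determined by the virtual knot $K$ together with the homology class $[\gamma]$ (the latter taken up to the action of the orientation-preserving, $[K]$-fixing mapping class group of $\Sigma$); in particular it is independent of the chosen diagram and is a virtual knot invariant.

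The main obstacle is not the homeomorphism bookkeeping, which is routine, but the appeal to Kuperberg's uniqueness theorem: it is the one essential ingredient and the sole reason the minimal genus hypothesis cannot be weakened, since without it $(\Sigma\times I, K)$ is only well defined up to stabilization and the Figure \ref{figure8} pathology reappears. One small technical point deserves care: the homeomorphism furnished by Kuperberg's theorem may become available only after an ambient isotopy in $\Sigma\times I$, so I would first isotope $D_1$ and $D_2$ into a common surface before invoking Lemma \ref{lemma2.2}.
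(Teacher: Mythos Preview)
Your approach is essentially the same as the paper's: the paper states that Proposition \ref{proposition5.3} ``follows immediately'' from Kuperberg's uniqueness theorem \cite{Kup2003} and gives no further proof, and you have simply fleshed out that one-line justification by separating the Reidemeister-move part (handled by Lemma \ref{lemma2.2}) from the surface-homeomorphism part. Your added remark that the invariant is really attached to the orbit of $[\gamma]$ under the orientation-preserving, $[K]$-preserving mapping class group of $\Sigma$ is a worthwhile clarification that the paper leaves implicit.
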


The following example, together with Lemma \ref{lemma5.2}, show that $W_K^{\gamma}(t)$ is strictly stronger than $W_K(t)$.

\begin{example}\label{example5.4}
Consider the Kishino knot $K$ depicted in Figure \ref{figure9}, which is a nontrivial virtual knot although it can be regarded as the connected sum of two trivial knots. Many virtual knot invariants fail to detect the nontriviality of it, including the writhe polynomial $W_K(t)$. It is known that the supporting genus of it equals two \cite[Theorem 4.1]{DK2005}, see also \cite[Example 7.3]{CSW2014}. Consider the closed $\gamma$ colored in red, which has four intersection points with $K$. Notice that the two virtual crossings correspond to two handles. Direct calculation shows that $f_{\gamma}(c_1)=f_{\gamma}(c_2)=0$, $f_{\gamma}(c_3)=-1$ and $f_{\gamma}(c_4)=1$, then we have $W_K^{\gamma}(t)=-t-t^{-1}$.
\begin{figure}[h]
\centering
\includegraphics{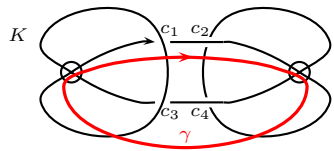}\\
\caption{Kishino knot and a closed curve $\gamma$}\label{figure9}
\end{figure}
\end{example}

\begin{corollary}\label{corollary5.5}
Let $K$ be a virtual knot represented by a knot diagram $D$ on $\Sigma$, if for any nontrivial $[\gamma]\in\mathcal{H}_1^K(\Sigma, \mathbb{Z})$ the writhe polynomial $W_K^{\gamma}(t)\neq0$, then $\Sigma$ is of minimal genus.
\end{corollary}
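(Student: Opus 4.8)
The plan is to prove the contrapositive: assuming $\Sigma$ is \emph{not} of minimal genus for $K$, I will exhibit a nontrivial class $[\gamma]\in\mathcal{H}_1^K(\Sigma,\mathbb{Z})$ with $W_K^{\gamma}(t)=0$, contradicting the hypothesis.

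The geometric input is a destabilization. If $\Sigma$ is not of minimal genus, then by Kuperberg's theorem and the accompanying (de)stabilization theory for virtual knots \cite{Kup2003}, after a finite sequence of Reidemeister moves we may assume that the diagram $D$ on $\Sigma$ is destabilizable; that is, $\Sigma\setminus D$ contains an essential simple closed curve along which surgery strictly lowers the genus. From this I extract a \emph{non-separating} simple closed curve $\gamma\subset\Sigma$ disjoint from $D$: if the destabilizing curve is itself non-separating we take it, and if it is separating then, since genus is additive across a separating curve, the complementary piece of $\Sigma$ not containing $D$ must have positive genus and hence contains a non-separating curve of $\Sigma$ disjoint from $D$. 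Being non-separating, $\gamma$ satisfies $[\gamma]\neq 0$ in $H_1(\Sigma,\mathbb{Z})$; being disjoint from $D$, it has zero geometric, hence zero algebraic, intersection number with $D$, so $[\gamma]\cdot[K]=0$. Thus $0\neq[\gamma]\in\mathcal{H}_1^K(\Sigma,\mathbb{Z})$.

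It remains to compute $W_K^{\gamma}(t)$ for this curve. Since $\gamma$ meets $D$ in no points, the coloring of $D$ with respect to $\gamma$ introduced in Section \ref{section2} is constant --- there is no intersection point at which the integer is incremented or decremented --- so $f_{\gamma}(c)=0$ for every $c\in C(D)$, whence $W_K^{\gamma}(t)=0$. Because $W_K^{\gamma}(t)$ is built from the chord index $f_{\gamma}$ (Lemma \ref{lemma2.2}) it is unaffected by Reidemeister moves, so this is also the writhe polynomial of the original diagram of $K$ on $\Sigma$ with respect to $[\gamma]$. This contradicts the assumption that $W_K^{\gamma}(t)\neq 0$ for every nontrivial $[\gamma]\in\mathcal{H}_1^K(\Sigma,\mathbb{Z})$, and therefore $\Sigma$ is of minimal genus.

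The one genuinely delicate step, which I expect to be the main obstacle, is the implication that non-minimality of $\Sigma$ forces, after finitely many Reidemeister moves, the diagram $D$ to become destabilizable within the same surface $\Sigma$: here one must appeal to the destabilization theory for virtual knots to rule out the possibility that the genus has to be raised before it can be lowered, which is where the one-component (knot, not link) hypothesis enters. Granting this, the rest is routine --- the reduction to a non-separating curve disjoint from $D$ is elementary surface topology, and the vanishing $f_{\gamma}\equiv 0$ is immediate from the definition of the coloring rule.
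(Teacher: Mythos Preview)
Your proof is correct and follows essentially the same approach as the paper's: both argue the contrapositive by invoking Kuperberg's result \cite{Kup2003} to produce, after an isotopy of the diagram, an essential non-separating simple closed curve $\gamma$ disjoint from $D$, and then observe that disjointness forces $f_\gamma\equiv 0$ and hence $W_K^\gamma(t)=0$. Your treatment is slightly more detailed---you explicitly handle the separating-versus-non-separating dichotomy for the destabilizing curve and correctly flag the Kuperberg destabilization step as the one nontrivial input---but the argument is the same.
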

\begin{proof}
If not, suppose the genus of $\Sigma$ is greater than the supporting genus of $K$. Then $D$ can be isotopic to another knot diagram $D'$ such that there exists an essential non-separating closed curve $\gamma\subset\Sigma$, which has no intersection with $D'$ \cite{Kup2003}. Obviously, the homology class $[\gamma]\neq[0]$ and $[\gamma]\in\mathcal{H}_1^K(\Sigma, \mathbb{Z})$. Since $\gamma\cap D'=\emptyset$, all crossing points of $D'$ have chord index zero, it follows that $W_K^{\gamma}(t)=0$. This contradicts the hypothesis that the writhe polynomial $W_K^{\gamma}(t)\neq0$.
\end{proof}

In \cite{Che2017}, the first author introduced a transcendental function invariant for virtual knots, which can be considered as a generalization of the writhe polynomial $W_K(t)$. In the end of this note, we explain how to use the chord index $f_{\gamma}$ to define a more general transcendental function virtual knot invariant. The main idea is to modify the definition of Ind$(c)$ a little bit, instead of counting the number of intersections between the chord $c$ and other chords, we count the weighted sum of these intersections where each weight is derived from $f_{\gamma}$.

As before, let $K$ be a virtual knot which possesses a knot diagram (also denoted by $K$) on a minimal surface $\Sigma$. Choose a closed curve $\gamma\subset\Sigma$ which satisfies $[\gamma]\in\mathcal{H}_1^K(\Sigma, \mathbb{Z})$. Now for each crossing point $c$, we obtain a chord index $f_{\gamma}(c)$ and use $\phi$ to denote the quotient map from $\mathbb{Z}$ to $\mathbb{Z}_{|f_{\gamma}(c)|}$. For instance, if $f_{\gamma}(c)=0$ then $\phi=id$. If $f_{\gamma}(c)=\pm1$, then $\phi$ sends every integer to zero.

Consider the Gauss diagram $G(K)$ and fix a chord $c$, let us use $\{r_1, \cdots, r_m\}$ and $\{l_1, \cdots, l_n\}$ to denote the set of chords crossing $c$ from left to right and the set of chords crossing $c$ from right to left, respectively. Obviously, it follows that
\begin{center}
$r_+(c)+r_-(c)=m$, $l_+(c)+l_-(c)=n$ and $\sum\limits_{i=1}^mw(r_i)-\sum\limits_{j=1}^nw(l_j)=\text{Ind}(c)$.
\end{center}
Now we define an \emph{index function with respect to $\gamma$} as follows
\begin{center}
$g_c^{\gamma}(s)=\sum\limits_{i=1}^mw(r_i)s^{\phi(f_{\gamma}(r_i))}-\sum\limits_{j=1}^nw(l_j)s^{\phi(-f_{\gamma}(l_j))}\in\mathbb{Z}[s^{\pm1}]/(s^{f_{\gamma}(c)}-1)$.
\end{center}
It is evident that Ind$(c)$ can be recovered from $g_c^{\gamma}(s)$ by setting $s=1$. By using $g_c^{\gamma}(s)$, we introduce the following transcendental function
\begin{center}
$F_K^{\gamma}(\textbf{t}, s)=\sum\limits_{k\in\mathbb{Z}}\sum\limits_{f_{\gamma}(c)=k}w(c)t_k^{g_c^{\gamma}(s)}-w(K)t_0^0$,
\end{center}
where the second sum runs over all crossing points $c$ satisfying $f_{\gamma}(c)=k$ and $\textbf{t}$ denotes $\{t_i\}_{i\in\mathbb{Z}}$.

It is worthy to remark that, strictly speaking, $F_K^{\gamma}(\textbf{t}, s)$ is just a formal sum rather than a transcendental function in the traditional sense. Since for different $i$ and $ j$, $t_i$ and $t_j$ live in different rings. In particular, the equality $t_i^0=1$ does not hold here, since there is no so called constant term in $F_K^{\gamma}(\textbf{t}, s)$. For example, $t_i^0-t_j^0\neq0$ if $i\neq j$.

\begin{proposition}\label{proposition5.6}
The index function $g_c^{\gamma}(s)$ satisfies all the chord axioms listed in Section \ref{section4}. As a corollary, $F_K^{\gamma}(\textbf{t}, s)$ is a virtual knot invariant.
\end{proposition}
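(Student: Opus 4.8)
The plan is to check, Reidemeister move by Reidemeister move, that $g_c^{\gamma}$ satisfies the four generalized chord index axioms of Section \ref{section4}, working on the Gauss diagram $G(K)$, and then to deduce the invariance of $F_K^{\gamma}(\textbf{t},s)$ by the usual bookkeeping. The first thing I would note is that $g_c^{\gamma}(s)$ lies in $\mathbb{Z}[s^{\pm1}]/(s^{f_{\gamma}(c)}-1)$, a ring depending only on the value $f_{\gamma}(c)$; since $f_{\gamma}$ is a chord index (Lemma \ref{lemma2.2}), in every comparison the axioms ask for---a crossing untouched by a move, the two crossings of a type II move, the three crossings of a type III move---both sides genuinely lie in the same ring, so the axioms make sense. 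The type I axiom is then immediate: the chord of a crossing created by a type I move has adjacent endpoints in $G(K)$, hence is met by no other chord, so $g_c^{\gamma}(s)=0$, a fixed element (and $f_{\gamma}(c)=0$, so the ring is $\mathbb{Z}[s^{\pm1}]$).

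The easy part of axioms 2 and 4 I would handle with the combinatorics of $G(K)$. If $c$ is untouched by a type I move the added chord is isolated and misses $c$; if it is untouched by a type II move the two added chords form a non-interleaved parallel pair, so any other chord meets both, from the same side, with opposite writhes but equal $f_{\gamma}$, and their joint contribution to $g_c^{\gamma}(s)$ cancels; if it is untouched by a type III move, then whether an untouched chord $d$ meets $c$, and from which side, is read off from data outside the triangle that the move does not alter, while $w(d)$ and $f_{\gamma}(d)$ are preserved, so $g_c^{\gamma}$ is unchanged. (The same remark shows that in a type III move every chord other than $c_1,c_2,c_3$ contributes equally before and after.) For axiom 2, the two crossings $c_1,c_2$ of a type II move are a non-interleaved pair that splits the circle of $G(K)$ the same way once the empty bigon is discarded; hence every chord meeting $c_1$ meets $c_2$ too, from the same side, with the same $w$ and $f_{\gamma}$, and $c_1,c_2$ miss each other, so $g_{c_1}^{\gamma}(s)=g_{c_2}^{\gamma}(s)$.

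The heart of the matter---and what I expect to be the main obstacle---is axiom 3 for the three crossings $c_1,c_2,c_3$ of a type III move. By the remark above, only the pattern of mutual crossings among $c_1,c_2,c_3$ can change, and it genuinely does change under the move. I would run through the finitely many configurations of the oriented third Reidemeister move, recording for each $c_i$ which of the other two chords it interleaves, together with the writhes and left/right labels, before and after, and check in each case that the before and after values of $g_{c_i}^{\gamma}(s)$ agree in $\mathbb{Z}[s^{\pm1}]/(s^{f_{\gamma}(c_i)}-1)$. The only substantive ingredient is that, once one applies $\phi$, the required equality is precisely the homological identity among $[D_{c_1}^r],[D_{c_2}^r],[D_{c_3}^r]$ and $[D]$ noted in Section \ref{section4}---equivalently the integer relation $f_{\gamma}(c_3)=f_{\gamma}(c_1)+f_{\gamma}(c_2)$ in the appropriate labelling, cf. \cite[Lemma 1.3]{Fie2001}---which forces a chord whose $f_{\gamma}$-value is divisible by $f_{\gamma}(c_i)$ into exponent $0$ and so reconciles the two sides. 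Thus the real work will be combinatorial rather than algebraic: lining up the signs and the left/right labels across the case list so that the homological relation can be applied; after that the reduction is one line.

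Finally, for the corollary I would track $F_K^{\gamma}(\textbf{t},s)$ through the moves. A type I move creates or removes a single summand $w(c)t_0^0$ (since $f_{\gamma}(c)=0$ and $g_c^{\gamma}(s)=0$), cancelled by the simultaneous change of $w(K)$ by $\pm1$ in the term $-w(K)t_0^0$; a type II move creates or removes two summands $w(c_1)t_k^G+w(c_2)t_k^G=0$ with $k=f_{\gamma}(c_1)=f_{\gamma}(c_2)$ and $G=g_{c_1}^{\gamma}(s)=g_{c_2}^{\gamma}(s)$, while $w(K)$ is unchanged; a type III move fixes each of $w(c_i)$, $f_{\gamma}(c_i)$ and $g_{c_i}^{\gamma}(s)$, hence every summand and $w(K)$. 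So $F_K^{\gamma}(\textbf{t},s)$ is invariant under Reidemeister moves on $\Sigma$, and since $g_c^{\gamma}$, like $f_{\gamma}$, depends only on $[\gamma]$ and the diagram, the minimal genus argument (Kuperberg's theorem, as for Proposition \ref{proposition5.3}) promotes this to a virtual knot invariant.
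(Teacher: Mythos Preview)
Your proposal is correct and follows essentially the same route as the paper: check the generalized chord index axioms for $g_c^{\gamma}$ on the Gauss diagram move by move, with the Type~III case resolved precisely by the relation $f_{\gamma}(c_3)=f_{\gamma}(c_1)+f_{\gamma}(c_2)$ from Section~\ref{section4}, and then read off the invariance of $F_K^{\gamma}(\textbf{t},s)$ by the same bookkeeping. Your write-up is in fact slightly more explicit than the paper's on two points---the ring-compatibility issue (that $g_c^{\gamma}$ lives in a ring determined by $f_{\gamma}(c)$, so the axioms are well posed) and the verification of axiom~4 for crossings untouched by the move---and you make the appeal to Kuperberg's theorem for the virtual-knot statement explicit, whereas the paper carries out one oriented $\Omega_3$ case in full and leaves the rest as ``similarly''.
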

\begin{proof}
The proof is analogous to that of Theorem 3.1 in \cite{Che2017}. We sketch it here.
\begin{itemize}
  \item For Reidemeister move of type I, the crossing point $c$ involved has chord index $f_{\gamma}(c)=0$. On the other hand, besides of $c$, no chord in $G(K)$ has nonempty intersection with $c$, therefore $g_c^{\gamma}(s)=0$. It follows that the contribution coming from $c$ to $F_K^{\gamma}(\textbf{t}, s)$ equals $w(c)t_0^0-w(c)t_0^0=0$.
  \item For Reidemeister move of type II, it suffices to notice that the two crossing points $c_1$ and $c_2$ have the same chord indices, i.e. $f_{\gamma}(c_1)=f_{\gamma}(c_2)$ but $w(c_1)=-w(c_2)$. On the other hand, any other chord either has nonempty intersection with both $c_1$ and $c_2$, or with none of them. We conclude that $g_{c_1}^{\gamma}(s)=g_{c_2}^{\gamma}(s)$, and the contributions coming from $c_1$ and $c_2$ to $F_K^{\gamma}(\textbf{t}, s)$ cancel out.
  \item For Reidemeister move of type III, we only check the left diagram illustrated in Figure \ref{figure5}. The other case can be verified similarly. Figure \ref{figure10} depicts two Gauss diagrams related by one  Reidemeister move of type III. As we mentioned in the proof of Lemma \ref{lemma2.2}, the chord index $f_{\gamma}(c_i)$ $(1\leq i\leq3)$ are preserved respectively under $\Omega_3$.
  \begin{figure}[h]
  \centering
  \includegraphics{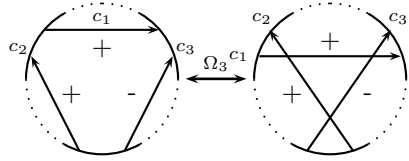}\\
  \caption{Two Gauss diagrams related by $\Omega_3$}\label{figure10}
  \end{figure}

  For the Gauss diagram on the left side of Figure \ref{figure10}, we assume that
  \begin{center}
  $g_{c_1}^{\gamma}(s)=a_1(s), g_{c_2}^{\gamma}(s)=a_2(s)$ and $g_{c_3}^{\gamma}(s)=a_3(s)$.
  \end{center}
  Then for the Gauss diagram on the right side of Figure \ref{figure10}, we have
  \begin{center}
  $g_{c_1}^{\gamma}(s)=a_1(s)+s^{\phi_{c_1}(f_{\gamma}(c_2))}-s^{\phi_{c_1}(f_{\gamma}(c_3))}$,

  $g_{c_2}^{\gamma}(s)=a_2(s)-s^{\phi_{c_2}(-f_{\gamma}(c_1))}+s^{\phi_{c_2}(-f_{\gamma}(c_3))}$,

  $g_{c_3}^{\gamma}(s)=a_3(s)-s^{\phi_{c_3}(-f_{\gamma}(c_1))}+s^{\phi_{c_3}(f_{\gamma}(c_2))}$,
  \end{center}
  where $\phi_{c_i}$ $(1\leq i\leq3)$ denotes the quotient map from $\mathbb{Z}$ to $\mathbb{Z}_{|f_{\gamma}(c_i)|}$. Recall that $f_{\gamma}(c_3)=f_{\gamma}(c_1)+f_{\gamma}(c_2)$, then for the Gauss diagram on the right side we have
  \begin{center}
  $g_{c_1}^{\gamma}(s)=a_1(s)+s^{\phi_{c_1}(f_{\gamma}(c_2))}-s^{\phi_{c_1}(f_{\gamma}(c_1)+f_{\gamma}(c_2))}=a_1(s)$,

  $g_{c_2}^{\gamma}(s)=a_2(s)-s^{\phi_{c_2}(-f_{\gamma}(c_1))}+s^{\phi_{c_2}(-f_{\gamma}(c_1)-f_{\gamma}(c_2))}=a_2(s)$,

  $g_{c_3}^{\gamma}(s)=a_3(s)-s^{\phi_{c_3}(f_{\gamma}(c_2))}+s^{\phi_{c_3}(f_{\gamma}(c_2))}=a_3(s)$.
  \end{center}
  We conclude that each $g_{c_i}^{\gamma}(s)$ $(1\leq i\leq3)$ is preserved under $\Omega_3$, which follows that the contributions coming from $c_1, c_2$ and $c_3$ to $F_K^{\gamma}(\textbf{t}, s)$ are preserved respectively. On the other hand, the contribution from all other chords not involved in $\Omega_3$ is invariant under $\Omega_3$. The proof is complete.
\qedhere
\end{itemize}
\end{proof}

\begin{remark}
If one chooses a closed curve representing $[K]$, then $F_K^{\gamma}(\textbf{t}, s)$ reduces to the transcendental function invariant introduced in \cite{Che2017}. We remark that $F_K^{\gamma}(\textbf{t}, s)$ also can be regarded as a knot invariant of knots in $\Sigma\times I$.
\end{remark}

\section*{Acknowledgement}
Zhiyun Cheng and Hongzhu Gao are supported by NSFC 11771042 and NSFC 12071034. Mengjian Xu is supported by NSFC 12001124.

\end{document}